\newtheorem{theorem}{Theorem}[section]
\newtheorem{proposition}[theorem]{Proposition}
\newtheorem{lemma}[theorem]{Lemma}
\newtheorem{corollary}[theorem]{Corollary}
\theoremstyle{definition}
\newtheorem{example}[theorem]{Example}
\newcommand{\PP}{\mathbb{P}}
\newcommand{\R}{\mathbb{R}}
\newcommand{\CC}{\mathbb{C}}
\newcommand{\St}[2]{\genfrac{\{}{\}}{0pt}{}{#1}{#2}}
\definecolor{LightGreen}{HTML}{4F7942}
\title{\bf Kinematic Stratifications}
\author{Veronica Calvo Cortes, Hadleigh Frost, and Bernd Sturmfels}
\date{}
\begin{document}

\maketitle

\begin{abstract}
\noindent
We study stratifications of regions in the space of symmetric matrices.
Their points are Mandelstam matrices for
momentum vectors in particle physics. Kinematic strata in these regions are indexed by signs and rank two matroids.
Matroid strata of Lorentzian quadratic forms
arise when all signs are non-negative.
 We characterize the posets of strata, 
 for massless and massive particles,
% with a focus on massless particles, 
 with and without momentum conservation.
\end{abstract}
 
 \section{Introduction}
 
 In theoretical physics, the \emph{momentum} of a particle is a vector in  {\em Minkowski space}, the  real vector space $\R^{1+d}$ with the Lorentzian inner product
$$ \quad p \cdot q \,\, = \,\, 
    p_0q_0-p_1q_1-\cdots-p_d q_d, \quad \text{ where } \ p=(p_0,p_1,\ldots,p_d) \text{ and } q=(q_0,q_1,\ldots,q_d).    $$
    The {\em universal speed limit} states that the quadratic inequality
    $p \cdot p \geq 0$ holds for each particle. 
A particle is called {\em massless} if $p$ lies on the light cone, i.e.~if the equation $p \cdot p = 0$ holds. 

We consider configurations of $n$ particles, each represented
by its own momentum vector $p^{(i)} \in \R^{1+d}$, for $i=1,2,\dots,n$.
The Lorentz group ${\rm SO}(1,d)$ acts on such configurations. The kinematic data of the particles is invariant under this action. Such invariant quantities can be expressed using the {\em Mandelstam variables} $s_{ij}$, which are the entries of the Gram matrix
\begin{equation}
\label{eq:gram}
\begin{bmatrix} 
s_{11} & s_{12} & \cdots & s_{1n} \\
s_{12} & s_{22} & \cdots & s_{2n} \\
\vdots & \vdots & \ddots & \vdots \\
s_{1n} & s_{2n} & \cdots & s_{nn} \\
\end{bmatrix}
\,\,=\,\, \begin{small}
\begin{bmatrix}
\,- \,\,p^{(1)} \,- \,\\ \,-\,\,p^{(2)}\,- \,\\ 
% \,-\,\,p^{(3)}\,- \,\\ 
\vdots \smallskip \\ \, -\,\,p^{(n)} \,-\,
\end{bmatrix} \end{small} \! \begin{small}
\begin{bmatrix} +1 & 0 &  \cdots & 0 \\
\,0  & \!\! -1 & \cdots & 0 \\
\,\vdots & \vdots  & \ddots & \vdots \\
\,0  & 0 &  \cdots &\! \! -1 \\
\end{bmatrix} \! 
\begin{bmatrix}
| & | & & \!|  \smallskip \\
(p^{(1)})^T \!& \!\!(p^{(2)})^T\! \!&\! \cdots &\!\! (p^{(n)})^T \smallskip \\
| & | &  & \!| \\
\end{bmatrix}\!. \end{small}
\end{equation}
This is a symmetric matrix of some rank $ r \leq d\!+\!1$. 
The variety of rank $r$ matrices
has codimension $\binom{n-r-1}{2}$ in the space of 
real symmetric $n\times n$ matrices. In view of the universal speed limit,
the equation (\ref{eq:gram}) parametrizes a subset of this variety. We call this subset
the {\em Mandelstam region}~$\mathcal{M}_{n,r}$. By quantifier elimination,~$\mathcal{M}_{n,r}$ is a semialgebraic set.
We are interested in the stratification of~$\mathcal{M}_{n,r}$ given
 by the signs of the matrix entries~$s_{ij}$.

 The intersection of the Mandelstam region with the cone of non-negative  matrices
 is of current interest in geometric combinatorics. We call this the {\em Lorentzian region},
 here denoted
  $$ \mathcal{L}_{n,r} \,\,\, = \,\,\,
 \mathcal{M}_{n,r} \,\,\cap\,\, (\R_{\geq 0})^{\binom{n+1}{2}} . $$
  The points in $\mathcal{L}_{n,\leq n} = \sqcup_{r=1}^n \mathcal{L}_{n,r} $
  are the {\em  Lorentzian polynomials} \cite{Branden, LorentzPol} of degree two,
  stratified as in \cite{BHKL}. Our article thus relates
  the geometry of  Lorentzian polynomials to scattering amplitudes \cite{BHPZ}.
  In our universe, particles satisfy momentum conservation (MC),
  and they may or may not be massless.
    We explore the combinatorial implications of these conditions.
    
We now present the organization of this paper, and we highlight our main results.
Section~\ref{sec2} gives the semialgebraic description
of the Mandelstam region $\mathcal{M}_{n,r}$
by alternating sign conditions on the principal minors of the matrix $S$.
Lemma \ref{lem:mand} is reminiscent of the familiar characterization of
positive definite matrices by the positivity of these minors.

Br\"anden \cite {Branden} proved that 
$\mathcal{L}_{n,\leq n}$ is a topological ball of dimension $\binom{n+1}{2}$.
In fact, $\mathcal{M}_{n,\leq n} = \sqcup_{r=1}^n \mathcal{M}_{n,r}$ is a disjoint union of $2^{n-1}$ such balls,
 intersecting along lower-dimensional boundaries.
We also discuss what happens when the particles are {\em on-shell}.
This means that the diagonal entries $s_{ii} = p^{(i)} \cdot p^{(i)}$
take on fixed prescribed values  $m_i^2$. In physics,
the $m_i$ are the masses of the $n$ particles.
The particles can be massive $(m_i >0)$ or  massless $(m_i=0)$.

\smallskip

In Section \ref{sec3} we turn to the {\em massless Mandelstam region} $\mathcal{M}_{n,r}^0$. Its non-negative
part~is the
{\em massless Lorentzian region} $\mathcal{L}_{n,r}^0$. These regions arise by intersecting
$\mathcal{M}_{n,r}$ and $\mathcal{L}_{n,r}$
with the linear subspace of symmetric $n \times n$ matrices with zeros
on the diagonal:
\begin{equation}\label{eq:diagonalszero}
 s_{11} \,=\, s_{22} \,=\, \cdots \,=\, s_{nn} \,\, = \,\, 0. \qquad 
 \end{equation}
 In the setting of  \cite{BHKL, Branden, LorentzPol}, points in
 $\mathcal{L}^0_{n,r}$ are multiaffine Lorentzian polynomials 
   $\ell_1^2 - \ell_2^2 - \cdots - \ell_r^2$, where
 $\ell_1,\ell_2,\ldots,\ell_r$ are linear forms in $n$ variables.
 The work of
 Br\"anden \cite{Branden} also shows that
 $\mathcal{L}_{n,\leq n}^0$ is a ball.
 We focus on the regions $\mathcal{L}^0_{n,r}$ of fixed rank $r$,
 and we stratify these   by rank two matroids.
 This is closely related to
  the decomposition in \cite{BHKL}.
We introduce {\em signed matroids} to describe the
stratification of the larger region $\mathcal{M}^0_{n,r}$.
The main result of Section~\ref{sec3} is Theorem~\ref{thm:massless}.
Corollary \ref{cor:countingMassless} gives formulas for the number of strata in any given dimension.

\smallskip

Section \ref{sec4} is devoted to the topology of the strata.
The inclusion relations are well behaved (Proposition \ref{prop:nicer}).
However, the strata have non-trivial topology. 
Theorem \ref{thm:topology} shows that the strata, with rank constraints relaxed,
 are homotopic to
configuration spaces for $n$ labeled points on the $(r-2)$-sphere.
For $r=3$, our kinematic strata are typically
 disconnected (Proposition \ref{prop:disconnected}).
 For $r=4$, Corollary \ref{cor:braid} leads us to the
  complex moduli space
 $M_{0,n}(\CC)$.

\smallskip

Section \ref{sec5} treats a more challenging variant which is relevant
for the real world, namely we study the MMC region.
Here, MMC stands for massless with momentum conservation.
The {\em MMC region} is the intersection of $\mathcal{M}^0_{n,r}$
with the linear subspace defined by the equations
\begin{equation}
\label{eq:momcon}
\qquad s_{i1} + s_{i2} + \cdots + s_{in} \,\, = \,\, 0
\qquad {\rm for} \quad i=1,2,\ldots,n. 
\end{equation}
These are $n$ independent linear constraints, equivalent to requiring
that $\,  p^{(1)} + p^{(2)} + \cdots +  p^{(n)}$ is the zero vector in $\R^{1+d}$.
Thus, in Section~\ref{sec5}, all row sums and column sums of the
matrix $S = [s_{ij}]$ are zero. The MMC region lives
in  $\R^{n(n-3)/2}$ and it has $2^{n-1}-n-1$ connected components.
Our main result (Theorem \ref{thm:MatroidMomentumConservation}) describes
the boundary structure and stratification. The section concludes with a detailed case
study of the MMC stratifications for $n=4$ and $n=5$.

In Section \ref{sec6} we place our findings into the context of 
particle physics. We 
discuss kinematic stratifications for massive particles,
and we offer an outlook towards future research.

\newpage

\section{The Mandelstam Region}
\label{sec2}

A symmetric $n \times n$ matrix $S = [s_{ij}]$ of rank $r$ is said to be a \emph{Mandelstam matrix} if
\begin{itemize}
\item the diagonal entries are non-negative, $s_{ii} \geq 0$ for $i=1,\ldots,n$; and \vspace{-0.2cm}
\item it has precisely one positive eigenvalue and $r-1$ negative eigenvalues.
\end{itemize}
We denote the set of all Mandelstam matrices of rank $r$ by ${\cal M}_{n,r}$. 
This is a semialgebraic set in $\mathbb{R}^{n+1 \choose 2}$, the space of all
symmetric matrices. The following is the Mandelstam analogue of the
familiar characterization
of positive semidefinite matrices in terms of principal minors.

\begin{lemma} \label{lem:mand}
A symmetric $n \times n$ matrix $S$ is Mandelstam if and only if
 \begin{equation}
 \label{eq:minors}
 \quad (-1)^{ |I| -1 }\, {\rm det}(S_I)  \,\,\geq \,\, 0 \qquad \hbox{for all}\,\,\,\, I \subseteq [n], 
 \end{equation}
 where ${\rm det}(S_I)$ are the principal minors of $S$. 
\end{lemma}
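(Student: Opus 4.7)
The plan is to prove both implications by reducing them to elementary sign information for the characteristic polynomial of $S$, while using the diagonal condition $s_{ii} \geq 0$ (the $|I|=1$ instance of \eqref{eq:minors}) to exclude the degenerate negative (semi)definite cases that would otherwise satisfy the alternating minor inequalities.

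For the direction ($\Rightarrow$), suppose $S$ is Mandelstam of rank $r$. Cauchy interlacing applied to the eigenvalues of $S$ and those of any principal submatrix $S_I$ implies that $S_I$ has at most one positive eigenvalue. If $S_I$ is singular then $\det(S_I)=0$ and the inequality is trivial; otherwise $S_I$ has full rank with signature either $(1,|I|-1)$ or $(0,|I|)$. The second possibility would force some diagonal entry of $S_I$ to be strictly negative, contradicting $s_{ii}\geq 0$, so the signature is $(1,|I|-1)$. Writing the determinant as a product of eigenvalues, $\det(S_I)=\lambda_+\cdot(-1)^{|I|-1}\prod_{j\geq 2}|\mu_j|$ with $\lambda_+>0$, from which $(-1)^{|I|-1}\det(S_I)\geq 0$ follows.

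For the direction ($\Leftarrow$), I would apply Descartes' rule of signs to
\[
\det(tI - S) \;=\; \sum_{k=0}^n (-1)^k e_k(S)\, t^{n-k}, \qquad e_k(S) \,=\, \sum_{|I|=k}\det(S_I).
\]
Summing the hypothesis over all $|I|=k$ gives $(-1)^{k-1}e_k(S)\geq 0$, so every non-leading coefficient $(-1)^k e_k(S)$ is non-positive. Deleting zeros, the coefficient sign pattern is $(+,-,-,\ldots,-)$ with at most one sign change. Since $S$ is symmetric, all roots of the characteristic polynomial are real, so Descartes yields at most one positive eigenvalue of $S$. The negative semidefinite alternative ($S\neq 0$ with no positive eigenvalue) is excluded by taking traces: the sum of eigenvalues would be strictly negative, contradicting $\mathrm{tr}(S)=\sum s_{ii}\geq 0$. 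Hence $S$ has exactly one positive eigenvalue (or $S=0$), and together with the rank assumption the remaining nonzero eigenvalues are $r-1$ negative ones, producing the Mandelstam signature $(1,r-1,n-r)$.

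The only subtle point is the bookkeeping around zero eigenvalues, which is where the diagonal non-negativity $s_{ii}\geq 0$ is essential in both directions: it is precisely this condition that separates Mandelstam matrices from negative (semi)definite matrices, which would otherwise satisfy the alternating minor inequalities vacuously. Once Cauchy interlacing and Descartes' rule are in place, no further calculation is needed, so I do not anticipate a significant obstacle.
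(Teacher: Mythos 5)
Your proof is correct, and while your forward direction coincides with the paper's (both rest on Cauchy interlacing applied downward, plus the observation that a nonsingular principal submatrix with at most one positive eigenvalue and non-negative diagonal cannot be negative definite), your converse is genuinely different. The paper sketches an induction that climbs up through principal submatrices, using interlacing to pass from ``$S_I$ has at most one positive eigenvalue'' to the same statement for $S_J$ with $|J|=|I|+1$, and the trace to exclude the all-negative case at each stage; note that interlacing alone only bounds the positive eigenvalues of $S_J$ by two, so that step silently also invokes the sign of $\det(S_J)$. Your route via Descartes' rule of signs applied to $\det(tI-S)=\sum_k(-1)^k e_k(S)t^{n-k}$ sidesteps this induction entirely: it is self-contained, needs no case analysis over submatrices, and in fact proves a slightly stronger statement, since it only uses the aggregated inequalities $(-1)^{k-1}e_k(S)\geq 0$ together with $s_{ii}\geq 0$ rather than the sign of every individual principal minor. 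The price is that Descartes gives no information about the submatrices themselves, which the paper's inductive scheme produces as a byproduct and which is the form in which the statement is reused later (e.g.\ in the proof of Lemma~\ref{lem:gram}). One shared loose end, which you flag and the paper does not: the zero matrix satisfies \eqref{eq:minors} for all non-empty $I$ but has no positive eigenvalue, so strictly speaking both arguments prove the lemma only for $S\neq 0$ (and $I\neq\emptyset$); this is a defect of the statement rather than of your proof.
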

 \begin{proof} This follows from the general results in  \cite{LorentzPol}.
 We refer to Baker's exposition in~\cite{Baker}.
The key step is Cauchy's interlacing theorem \cite{Cauchy}. This states 
that the eigenvalues of $S_I$ interlace the eigenvalues of $S_J$ whenever
 $I \subset J$ and $|I| = |J|-1$. Hence, if $S_I$ has at most one positive
eigenvalue then so does $S_J$. But $S_J$ cannot have
all negative eigenvalues because its trace is non-negative. The lemma follows then inductively.
\end{proof}

The name of our matrices refers to the physicist Stanley Mandelstam (1928--2016) who is credited for introducing the variables $s_{ij}$ in the context of scattering amplitudes.
In \cite{Mandelstam} the role of $\mathcal{M}_{n,r}$ as a kinematic space is recognized.
A term more familiar to mathematicians might be ``Lorentzian matrices.''
These encode Lorentzian quadratic forms   \cite{Branden,LorentzPol}.
We here use the term {\em Lorentzian matrix} for
a Mandelstam matrix whose entries $s_{ij}$ are all non-negative.

\smallskip

Mandelstam matrices arise as
 Gram matrices of momentum vectors in $\mathbb{R}^{1+d}$ with 
the Lorentzian inner product. A non-zero momentum vector is any vector $p\in \mathbb{R}^{1+d}$ of the form
\begin{equation}\label{eq:plambda}
p \,=\, \lambda \,(1,x_1,\ldots,x_{d}),
\end{equation}
for some scalar $\lambda\neq 0$, and $x = (x_1,\ldots,x_{d})$ in the closed unit ball $\mathbb{B}^d = \{x \in \R^d : || x || \leq 1 \}$. 
Given $n$ momentum vectors, $p^{(i)}$, their Gram matrix $S = [s_{ij}]$ has entries $s_{ij} = p^{(i)} \cdot p^{(j)}$.
This is the matrix in (\ref{eq:gram}).
The entries of $S$ may now be written as 
\begin{equation}
s_{ij} \,\,= \,\,\lambda_i \lambda_j \! \left( 1 - \langle x^{(i)}, x^{(j)} \rangle \right)
\end{equation}
Here  $\cdot$ is the Lorentz inner product on $\R^{1+d}$ 
 and $\langle\,\,,\, \rangle$ is the Euclidean inner product on $\R^d$.

\begin{lemma} \label{lem:gram}
A symmetric $n \times n$ matrix $S$ is Mandelstam, i.e.~$S$ lies in the region
${\cal M}_{n,\leq 1+d}$, if and only if it is the Gram matrix of $n$ momentum 
vectors in $(1+d)$-dimensional spacetime.
\end{lemma}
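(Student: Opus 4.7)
The statement is an iff, so I would split into two directions and argue each by linear-algebra considerations, using Sylvester's law of inertia as the main structural input.

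For the ``if'' direction, assume $S = PGP^T$ where $P$ is the $n \times (1+d)$ matrix whose rows are momentum vectors and $G = \mathrm{diag}(1,-1,\ldots,-1)$. The diagonal entries $s_{ii} = p^{(i)} \cdot p^{(i)}$ are non-negative by the universal speed limit (since each $p^{(i)} = \lambda_i(1, x^{(i)})$ with $\|x^{(i)}\|\le 1$). The quadratic form $v \mapsto v^T S v$ is the pullback of $G$ along the linear map $v \mapsto P^T v$, so by Sylvester it has at most one positive and at most $d$ negative eigenvalues. To upgrade ``at most one positive'' to ``exactly one'' whenever $r \geq 1$, note that if $S$ had no positive eigenvalue then its trace $\sum s_{ii}$ would be non-positive, forcing each $s_{ii}=0$ and hence every $p^{(i)}$ to lie on the light cone. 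Since $\mathrm{rank}(S)\geq 1$ some off-diagonal entry $s_{ij}\neq 0$, and the $2\times 2$ principal submatrix at $\{i,j\}$ becomes $\bigl(\begin{smallmatrix}0 & s_{ij}\\ s_{ij} & 0\end{smallmatrix}\bigr)$, which has a positive eigenvalue; Cauchy interlacing (already cited in the proof of Lemma \ref{lem:mand}) then forces $S$ to have a positive eigenvalue, a contradiction. So $S\in \mathcal{M}_{n,\leq 1+d}$.

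For the ``only if'' direction, let $S$ be Mandelstam of rank $r \leq 1+d$, so it has one positive eigenvalue $\mu_0>0$ with unit eigenvector $w_0$ and $r-1$ negative eigenvalues $-\mu_1,\ldots,-\mu_{r-1}$ with orthonormal eigenvectors $w_1,\ldots,w_{r-1}$. Form the $n \times (1+d)$ matrix
\[
P \,=\, \bigl[\,\sqrt{\mu_0}\,w_0 \,\big|\, \sqrt{\mu_1}\,w_1 \,\big|\, \cdots \,\big|\, \sqrt{\mu_{r-1}}\,w_{r-1} \,\big|\, 0 \,\big|\, \cdots \,\big|\, 0\,\bigr].
\]
Then $S = P G P^T$ by the spectral theorem, so $s_{ij} = p^{(i)} \cdot p^{(i)}$ for the rows $p^{(i)}$ of $P$. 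It remains to verify that each $p^{(i)}$ is a momentum vector in the sense of (\ref{eq:plambda}). Writing $p^{(i)} = (u_i, v^{(i)})$ with $u_i = \sqrt{\mu_0}\,w_{0,i}$, we have $s_{ii} = u_i^2 - \|v^{(i)}\|^2 \geq 0$. If $p^{(i)} = 0$ we are done. Otherwise we must have $u_i \neq 0$ (else $s_{ii} = -\|v^{(i)}\|^2 < 0$, contradicting $s_{ii} \geq 0$), and then $p^{(i)} = u_i\bigl(1, v^{(i)}/u_i\bigr)$ with $\|v^{(i)}/u_i\|^2 = \|v^{(i)}\|^2/u_i^2 \leq 1$ precisely because $s_{ii}\ge 0$. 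Thus $p^{(i)}$ has the form (\ref{eq:plambda}).

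The main subtlety in the plan is the forward direction's lower bound on the number of positive eigenvalues; this is where one genuinely needs the diagonal non-negativity together with Cauchy interlacing, rather than just Sylvester. The backward direction is essentially a bookkeeping exercise: the key observation is that the condition $s_{ii} \geq 0$ exactly matches the condition $\|x^{(i)}\| \leq 1$ needed to realize $p^{(i)}$ as a momentum, and simultaneously rules out the pathological case where the time component vanishes but the spatial part does not.
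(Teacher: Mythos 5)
Your proof is correct, and the \emph{only if} direction (spectral decomposition of $S$, then reading off $\lambda_i$ and $x^{(i)}$ from the rows and using $s_{ii}\geq 0$ to get $\|x^{(i)}\|\leq 1$) is essentially the paper's argument, just written out in more detail. The \emph{if} direction, however, takes a genuinely different route. The paper funnels everything through Lemma \ref{lem:mand}: for each subset $I$ of size $m+1$ it exhibits $(-1)^m\det(S_I)$ as a non-negative quantity (a product of squared multipliers and a squared volume, via a Cauchy--Binet-type determinant identity), and then invokes the principal-minor characterization of Mandelstam matrices. You instead work directly with the eigenvalue definition: Sylvester's inertia bound applied to $S=PGP^T$ gives $n_+(S)\leq 1$ and $n_-(S)\leq d$ at once, and your trace-plus-interlacing argument pins down $n_+(S)=1$ for $r\geq 1$ (one could even shortcut this: all eigenvalues $\leq 0$ together with $\operatorname{tr}(S)=\sum s_{ii}\geq 0$ already forces $S=0$). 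Your version is arguably more elementary and bypasses Lemma \ref{lem:mand} entirely; the paper's version has the side benefit of producing the explicit geometric interpretation of the signed principal minors as squared simplex volumes, which resonates with the matroid/volume themes later in the paper. Both proofs share the same mild imprecision about the degenerate case of zero momentum vectors (the paper assumes no zero rows; you assume $r\geq 1$), so no complaint there. One typo: in your only-if direction, $s_{ij}=p^{(i)}\cdot p^{(i)}$ should read $s_{ij}=p^{(i)}\cdot p^{(j)}$.
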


\begin{proof}
Assume that $S$ has no zero rows or columns. For the only-if direction, take
a Mandelstam matrix $S$. By Lemma \ref{lem:mand} and diagonalization of symmetric matrices, it can be factorized
as in \eqref{eq:gram}. Namely, we write
$S = M D M^T$,
where $D = {\rm diag}(1,-1,-1,\ldots,-1)$. Let the row vectors of $M$ be 
$p^{(i)} = \lambda_i (1,x^{(i)})$, for some $x^{(i)} \in \mathbb{R}^d$ and $\lambda_i \neq 0$. 
As $s_{ii} = \lambda_i^2 (1 - ||x^{(i)}||^2)$ is non-negative, we conclude that $x^{(i)} \in \mathbb{B}^d$.
Thus, the $p^{(i)}$ are momentum vectors in $\R^{1+d}$.

For the if direction, suppose that $S$ is  the Gram matrix of $n$ non-zero momentum vectors 
$p^{(i)} =  \lambda_i (1,x^{(i)})$, with $x^{(i)} \in \mathbb{B}^d$. Consider any subset
$I \subset [n]$ of cardinality $m+1$. The signed $m$-dimensional volume of the convex
hull of $\{ x^{(i)}\,:\, i \in I\}$ in $\R^d$ is equal to
\[
    V \,=\, \frac{1}{(m+1)!}\, \det \begin{bmatrix}\,
        1 & x^{(1)} \\ \,\vdots & \vdots \\ \,1 & x^{(m+1)}
    \end{bmatrix} \,=\, \frac{(-1)^m}{(m+1)!} \, \det \begin{bmatrix}
        1 & \cdots & 1\\ -(x^{(1)})^T & \cdots &-(x^{(m+1)})^T
    \end{bmatrix}.
\]
Here, we take $I = \{1,\ldots,m+1\}$, after relabeling. The product of these two formulas is the
determinant of the matrix product. We see that this determinant has the desired sign:
\[
0 \,\,\leq \,\,\Bigl( \,\prod_{i \in I} \lambda_i \,\Bigr)^{\! 2}\, V^2\,\, =\,\, \left( \frac{1}{(m+1)!}\right)^{\!\! 2}\, (-1)^m \, {\rm det}(S_I).
\]
Therefore, by Lemma \ref{lem:mand}, the Gram matrix $S$ is Mandelstam.
\end{proof}

Let us now consider the possible sign patterns of the off-diagonal entries in a Mandelstam matrix $S$. Applying Lemma \ref{lem:mand} to the principal minors of size $2$ and $3$, we observe
\begin{equation}\label{eq:2-3minors}
s_{ii}s_{jj}  \leq s_{ij}^2 \quad  \text{ and } \quad 2 s_{ij} s_{ik} s_{jk} + s_{ii}s_{jj}s_{kk} \geq s_{ii} s_{jk}^2 +s_{jj} s_{ik}^2 +s_{kk} s_{ij}^2. 
\end{equation}
By combining these two inequalities for any distinct $i,j,k$, it follows that
\begin{equation}\label{eq:signcond}
s_{ij} s_{ik} s_{jk}\,\, \geq \,\,0.
\end{equation}
We can now split $[n]$ in two subsets: if $s_{ij}$ is positive we put $i,j$ in the same subset and if it is negative we put each in a different subset. Equation \eqref{eq:signcond} guarantees we can do this consistently. In other words, if $S$ has no zero entries, then there is a sign vector $\sigma \in \{-,+\}^n$ so that ${\rm sgn}(s_{ij}) = \sigma_i \sigma_j$.
If we view $S$ as a Gram matrix of momentum vectors, then $\sigma_i$ is the sign of 
the multiplier $\lambda_i$ in \eqref{eq:plambda}. 
We can fix $\sigma_1 = +$, so
there are $2^{n-1}$ allowable choices of sign patterns.
We define the \emph{signed Mandelstam region} ${\cal M}_{n,\sigma,r} $
to be the closure in  $ {\cal M}_{n,r}$ of the subset of Mandelstam matrices with no zero entries whose signs are determined by $\sigma$.
 
\begin{corollary}\label{cor:signs}
The Mandelstam region ${\cal M}_{n,r}$ is the union of the $2^{n-1}$ signed Mandelstam regions ${\cal M}_{n,\sigma,r}$,
and the relative interiors of these regions are pairwise disjoint. In symbols,
\begin{equation}
{\cal M}_{n,r} \,\,=\,\, \bigcup_{\sigma} \, {\cal M}_{n,\sigma,r}.
\end{equation}
 \end{corollary}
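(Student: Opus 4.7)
The plan is to combine the sign-pattern observation preceding the corollary with a density argument in $\mathcal{M}_{n,r}$. For each $\sigma \in \{-,+\}^n$ with $\sigma_1 = +$, I set
$$U_\sigma \,=\, \bigl\{\, S \in \mathcal{M}_{n,r} \,:\, s_{ij} \neq 0 \text{ and } \mathrm{sgn}(s_{ij}) = \sigma_i \sigma_j \text{ for all } i \neq j \,\bigr\}.$$
The argument around \eqref{eq:signcond} already shows that a Mandelstam matrix with no off-diagonal zeros lies in a unique $U_\sigma$, so the $2^{n-1}$ sets $U_\sigma$ are pairwise disjoint open subsets of $\mathcal{M}_{n,r}$, and by definition $\mathcal{M}_{n,\sigma,r}=\overline{U_\sigma}$ (closure taken in $\mathcal{M}_{n,r}$).

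To establish the covering $\mathcal{M}_{n,r} = \bigcup_\sigma \mathcal{M}_{n,\sigma,r}$, it suffices to show $\bigsqcup_\sigma U_\sigma$ is dense in $\mathcal{M}_{n,r}$. Given $S \in \mathcal{M}_{n,r}$, I realize $S = MDM^T$ with momentum vector rows $p^{(i)} = \lambda_i(1, x^{(i)})$, $x^{(i)} \in \mathbb{B}^d$, using Lemma \ref{lem:gram}. The off-diagonal entry $s_{ij} = \lambda_i \lambda_j\bigl(1 - \langle x^{(i)}, x^{(j)}\rangle\bigr)$ vanishes exactly when $\lambda_i\lambda_j=0$ or $x^{(i)}=x^{(j)}$ lies on the unit sphere. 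I remove each such degeneracy by a small perturbation of the rows of $M$, carried out inside the fixed $r$-dimensional subspace $V \subset \R^{1+d}$ spanned by the original nonzero rows: a zero row $p^{(i)}=0$ is replaced by $\varepsilon v^{(i)}$ for a momentum vector $v^{(i)}\in V$ (which exists because the nonzero $p^{(j)}$'s span $V$ and lie in the affine slice $\{y_0=1\}$ with $x$-parts in $\mathbb{B}^d$), and any coincident pair $x^{(i)}=x^{(j)}$ on the sphere is separated by shifting $x^{(i)}$ radially inward. Keeping all perturbations within $V$ preserves $\mathrm{rank}(S)=r$, and for a generic small perturbation all off-diagonal entries are nonzero, placing the perturbed matrix in some $U_\sigma$.

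For pairwise disjoint relative interiors: each $U_\sigma$ is open in $\mathcal{M}_{n,r}$ and dense in $\mathcal{M}_{n,\sigma,r}$, so it is the relative interior. To see that no boundary point $S \in \mathcal{M}_{n,\sigma,r}\setminus U_\sigma$ lies in the interior, I observe that $S$ has a zero off-diagonal entry; the density argument produces nearby matrices in $\bigsqcup_\sigma U_\sigma$, and the sign of such a zero entry can be perturbed to both values within $\mathcal{M}_{n,r}$, giving nearby matrices in some $U_{\sigma'}$ with $\sigma' \neq \sigma$. Since $U_{\sigma'}$ is open and disjoint from $U_\sigma$, it is disjoint from $\overline{U_\sigma}=\mathcal{M}_{n,\sigma,r}$, so no neighborhood of $S$ is contained in $\mathcal{M}_{n,\sigma,r}$. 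The $U_\sigma$ are then manifestly pairwise disjoint.

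The main obstacle is preserving the rank condition during the perturbation: an arbitrary symmetric perturbation can raise the rank of $S$ and take us out of $\mathcal{M}_{n,r}$. Performing the perturbation at the level of the momentum vector factorization $M$, with rows constrained to the fixed $r$-dimensional subspace of $\R^{1+d}$, sidesteps this issue and keeps the Gram matrix at rank exactly $r$ throughout.
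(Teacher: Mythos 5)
Your covering argument addresses the real content here: the paper states this corollary without proof, as a consequence of the sign analysis around \eqref{eq:signcond} and the definition of $\mathcal{M}_{n,\sigma,r}$ as a closure, and the only nontrivial point is exactly the density of $\bigsqcup_\sigma U_\sigma$ in $\mathcal{M}_{n,r}$ that you supply. Your key idea --- perturb at the level of the momentum vectors, inside the fixed $r$-dimensional span $V$, so that the rank cannot jump --- is the right one, and the resulting Gram matrix stays Mandelstam because $V$ carries signature $(1,r-1)$. One internal inconsistency: ``shifting $x^{(i)}$ radially inward'' replaces $\lambda_i(1,x^{(i)})$ by $\lambda_i(1,tx^{(i)})$, which generally does \emph{not} lie in $V$, contradicting your own constraint. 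The fix is to perturb $p^{(i)}$ within $V$ into the open solid light cone $\{p\in V: p\cdot p>0\}$, which is nonempty since $V$ has signature $(1,r-1)$; this separates parallel lightlike pairs and handles zero rows uniformly.

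The second part contains a genuine error. You claim that at a boundary point with $s_{ij}=0$ ``the sign of such a zero entry can be perturbed to both values within $\mathcal{M}_{n,r}$.'' This is false when the zero comes from two \emph{parallel} nonzero lightlike vectors rather than from a loop: there $s'_{ij}=\lambda'_i\lambda'_j\bigl(1-\langle x'^{(i)},x'^{(j)}\rangle\bigr)$ with the second factor always $\geq 0$ and $\lambda'_i\lambda'_j$ bounded away from $0$, so only the sign $\sigma_i\sigma_j$ (or $0$) is attainable nearby. Consequently your assertion that $U_\sigma$ \emph{is} the relative interior of $\mathcal{M}_{n,\sigma,r}$ is also false: a matrix whose only degeneracy is one parallel pair has an entire neighborhood in $\mathcal{M}_{n,r}$ contained in $\overline{U_\sigma}$, hence lies in the relative interior without lying in $U_\sigma$. (The inference ``open and dense in the closure, hence equal to the relative interior'' is not valid in general.) Fortunately the corollary asserts less than what you tried to prove, and it follows from the correct pieces of your argument: the $U_\tau$ are pairwise disjoint open subsets of $\mathcal{M}_{n,r}$ whose union is dense, so $U_\tau\cap\overline{U_{\tau'}}=\emptyset$ for $\tau\neq\tau'$; if some $S$ lay in the relative interiors of both $\mathcal{M}_{n,\sigma,r}$ and $\mathcal{M}_{n,\sigma',r}$, a common neighborhood would be contained in $\overline{U_\sigma}\cap\overline{U_{\sigma'}}$ yet meet some $U_\tau$ by density, forcing $\tau=\sigma=\sigma'$. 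You should replace your relative-interior paragraph with this argument and drop the both-signs claim.
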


The {\em Lorentzian region} $\mathcal{L}_{n,r}$ is the set of all Mandelstam matrices with non-negative entries. 
 It is the region ${\cal M}_{n,\sigma,r}$ with $\sigma = (+,+,\ldots,+)$. 
Thus ${\cal L}_{n,r} $ is the set of Lorentzian $n\times n$ matrices of rank~$r$. 
Many facts about Lorentzian matrices can be extrapolated to any Mandelstam matrix.
All signed Mandelstam regions ${\cal M}_{n,\sigma,r}$ are the same
up to sign changes. 

Indeed, given any Lorentzian matrix $S \in {\cal L}_{n,r}$, we obtain a Mandelstam matrix $S' \in {\cal M}_{n,\sigma,r}$ by conjugating $S$ with the matrix $J = {\rm diag}(\sigma_1,\sigma_2,\ldots,\sigma_n)$. The map $S \mapsto J S J$ is a linear isomorphism, and so
 ${\cal M}_{n,\sigma,r}$ is homeomorphic to  ${\cal L}_{n,r}$. 
The individual strata can have complicated topology in general, but the full region where we allow any rank is well-behaved.
Br\"anden \cite{Branden} proved that ${\mathcal{L}}_{n,\leq n}$ s a topological ball.
It has a decomposition by polymatroids, as shown in general by Br\"anden and Huh \cite{LorentzPol} and explained in more detail in Baker et al.~\cite{BHKL}.
Corollary \ref{cor:signs} says that ${\mathcal{M}}_{n,\leq n}$ is the union of $2^{n-1}$ such~balls.

\smallskip

In physics, a particle with momentum vector $p^{(i)}$ is said to have {\em mass} $\,m_i \geq 0\,$ if
\begin{equation}
s_{ii} \,\,=\,\, p^{(i)} \cdot p^{(i)} \,\,=\,\, (m_i)^2. 
\end{equation}
A particle with mass $m_i > 0$ is called \emph{massive}, and a particle with mass $m_i = 0$ is called \emph{massless}. 
The mass of a particle is a fixed constant. 
Thus, the momentum vector $p$ of a particle with mass $m > 0$ lies on the \emph{mass shell hyperboloid} given by $p \cdot p = m^2 > 0$. 

The real part of this hyperboloid is disconnected with two components, depending on the sign of $p_0$; see Figure \ref{fig:shell}. 
A massless momentum vector $p$ lies on the \emph{light cone}: $p \cdot p \,\,=\,\, 0.$

\medskip
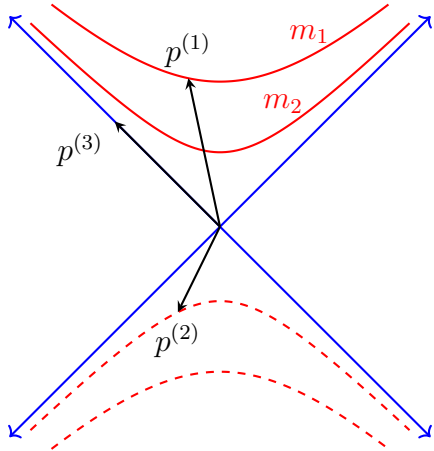
\begin{figure}[h]
\centering
\begin{tikzpicture}[scale=1.4]
  \draw[thick,blue,<->] (-2,-2) -- (2,2);   % ray at 45° (first quadrant)
  \draw[thick,blue,<->] (2,-2) -- (-2,2);  % ray at 135° (second quadrant)
  \draw[red,thick,domain=-1.8:1.8,samples=100] plot (\x, {sqrt(0.5+\x*\x)});
    \draw[red,thick,domain=-1.6:1.6,samples=100] plot (\x, {sqrt(1.9+\x*\x)});
      \draw[red,thick,dashed,domain=-1.8:1.8,samples=100] plot (\x, {-sqrt(0.5+\x*\x)});
    \draw[red,thick,dashed,domain=-1.6:1.6,samples=100] plot (\x, {-sqrt(1.9+\x*\x)});
    \node[red] at (0.85,1.83) {$m_1$};
     \node[red] at (0.6,1.1677) {$m_2$};
    \draw[thick,black,->, >=stealth] (0,0) -- (-0.3,1.4106) node[above]{$p^{(1)}$};
    \draw[thick,black,->, >=stealth] (0,0) -- (-0.4,-0.8124) node[below]{$p^{(2)}$};
    \draw[thick,black,->, >=stealth] (0,0) -- (-1,1) node[below left]{$p^{(3)}$};
\end{tikzpicture}
\caption{The light cone (blue) and the mass shells (red) for two given masses.}
\label{fig:shell}
\end{figure}

The mass shell hyperboloids are contained inside the two nappes of this cone: the upper nappe, $p_0 > 0$, and the lower nappe, $p_0 < 0$. In terms of this light cone (Figure \ref{fig:shell}), a Mandelstam matrix $S \in {\cal M}_{n,\sigma,r}$ is the Gram matrix of $n$ vectors $p^{(i)}$ that lie either inside ($p\cdot p > 0$) or on ($p\cdot p = 0$) the light cone. The entries of the sign vector $\sigma$ record which $p^{(i)}$ are in the upper nappe ($\sigma_i > 0$), and which in the lower nappe ($\sigma_i < 0$) of the light cone. 

Given that the masses $m_i$ are fixed quantities, we are motivated to study
the subsets of Mandelstam regions $ {\cal M}_{n,r}$ 
where each $s_{ii} = m_i^2$ is fixed to some non-negative value. Fixing $s_{ii} = m_i^2$ is known as the \emph{on shell condition} for a particle of mass $m_i$.  Most of this paper is devoted to
 particles that are massless ($m_i = 0$).
 Kinematic stratifications for massive particles are discussed in  Section \ref{sec6}.
The distinction is illustrated in Figure \ref{fig:mand}.

\section{Massless Particles}
\label{sec3}

Henceforth, we require the $n$ particles to be massless. 
The {\em massless Mandelstam region} $\mathcal{M}^0_{n,r}$ is
the semialgebraic set of Mandelstam matrices $S \in {\cal M}_{n,r}$ with zeros on the diagonal (i.e. $s_{11}=\cdots =s_{nn} = 0$). 
The {\em massless Lorentzian region}  $\mathcal{L}^0_{n,r}$ is
the intersection of $\mathcal{M}^0_{n,r}$ with the
non-negative orthant $(\R_{\geq 0})^{\binom{n}{2}}$. A matrix $S \in {\cal L}^0_{n,r}$ 
represents a multiaffine Lorentzian quadratic form. 
In this section we study the sign stratifications
of both $\mathcal{M}^0_{n,r}$ and $\mathcal{L}^0_{n,r}$.

Recall, from Lemma \ref{lem:mand}, that the principal minors of a Mandelstam matrix $S$ satisfy
the inequalities in \eqref{eq:minors}. Let us examine these inequalities upon restricting to the massless Mandelstam region. We have $s_{ii} = 0$ for the smallest minors, and the $2\times 2$ principal minors are ${\rm det}(S_{\{i,j\}}) = - s_{ij}^2 \leq 0$ for all pairs $\{i,j\}$. So these small minors satisfy Lemma \ref{lem:mand} trivially. However, for the $3\times 3$ principal minors 
of a massless Mandelstam matrix, we have
\begin{equation}
\label{eq:triples}
{\rm det}(S_{\{i,j,k\}}) \,\, = \,\, s_{ij} s_{ik} s_{jk} \,\, \geq \,\, 0.
\end{equation}
This  puts the same condition on the signs of off-diagonal entries as in \eqref{eq:signcond}. Moreover, for each quadruple $I = \{i,j,k,l\}$, the following quartic polynomial must be non-positive:
\begin{equation}
\label{eq:quadruples}
{\rm det}(S_{\{i,j,k,l\}}) \,\,=\,\,
s_{ij}^2 s_{kl}^2 \,+ \,
s_{ik}^2 s_{jl}^2 \,+\,
s_{il}^2 s_{jk}^2 
\,\,-\,\,2 \cdot \bigl(s_{ij}s_{ik} s_{jl} s_{kl} + s_{ij} s_{il} s_{jk} s_{kl}+s_{ik} s_{il} s_{jk} s_{jl} \bigr).
\end{equation}

If we pass to square roots, by setting $p_{ij} = \sqrt{s_{ij}},\ldots,p_{kl}=\sqrt{s_{kl}}$,
then (\ref{eq:quadruples}) factors:
\begin{equation}
\label{eq:factorsasfollows}
 \begin{matrix}
{\rm det}(S_{\{i,j,k,l\}}) &  = & 
    ( p_{ij} p_{kl} +p_{ik} p_{jl} +p_{il} p_{jk})
   ( -p_{ij} p_{kl} -p_{ik} p_{jl} +p_{il} p_{jk}) \\ & &  \,
   ( -p_{ij} p_{kl} +p_{ik} p_{jl} -p_{il} p_{jk})
   ( p_{ij} p_{kl} -p_{ik} p_{jl} -p_{il} p_{jk})
     . \end{matrix} 
\end{equation}     
The quartic (\ref{eq:quadruples}) is the squared version of the {\em Pl\"ucker quadric}, which is known as the {\em Schouten identity} in physics. We refer to the study of the {\em squared Grassmannian} in \cite[Section 3]{DFRS}. 

This observation guides us to the connection with matroid theory. 
We encounter the matroid decomposition of the space of multiaffine Lorentzian polynomials, due to
Br\"anden and Huh \cite{LorentzPol}, but with signs, and restricted to
matroids of rank two. All matroids in this paper have rank two. From now on, we
use the term ``matroid'' to mean ``rank two matroid.''

For us,  a {\em matroid}  on  $[n] = \{1,\ldots,n\}$ is a partition 
 $P = P_1 \sqcup P_2 \sqcup \cdots \sqcup P_m$ of
 a subset of $ [n]$ with $m \geq 2$. The {\em bases} of $P$ are the pairs $\{u,v\}$
 where $u \in P_i$ and $v \in P_j$ for~$i \not= j$.
 The elements in $[n] \backslash P$ are called {\em loops}.
  The matroid $P$ has $m$ parts $P_1,\ldots,P_m$, and it has $l = n-|P|$
 loops. 
 %  In physics, the loops are soft limits of particles.
The {\em uniform matroid} $U_n$ is the  partition of $P = [n]$
into $n$ singletons~$P_i = \{i\}$.

Fix a sign vector $\sigma \in \{-,+\}^n$. We identify $\sigma$ with its negation $-\sigma$. We call the pair $(P,\sigma)$ a {\em signed matroid}. Let $\mathcal{M}^0_{P,\sigma,r}$ be the subset
of the massless Mandelstam region~$\mathcal{M}^0_{n,r}$ defined by $ \,{\rm sign}(s_{ij}) =  \sigma_i \sigma_j \, $ if $\,\{i,j\}$ is a basis of $P$,
and $s_{ij} = 0\,$ if $\,\{i,j\}$ is not a basis of $P$.

\smallskip

The following theorem on the {\em kinematic stratification}  is the main result in this section.

\begin{theorem}\label{thm:massless}
Fix $r \geq 1$. The massless Mandelstam region is the disjoint union 
\begin{equation}\label{eq:Mstrata}
   \mathcal{M}^0_{n,r} \,\,=\,\, \bigsqcup_{P,\sigma} \,\mathcal{M}^0_{P,\sigma,r}
\end{equation}
where $(P, \sigma)$ runs over all signed matroids on $[n]$.
The {\em kinematic stratum} $\mathcal{M}^0_{P,\sigma,r}$ is non-empty if and only if
 $\,3 \leq r \leq m$ or $r=m=2$. If this holds, the dimension of the stratum~is
 \begin{equation}
 \label{eq:dimformula} {\rm dim} \bigl( \mathcal{M}^0_{P,\sigma,r} \bigr) \,\, = \,\, m(r-2)+n-l-\binom{r}{2}. 
 \end{equation}
\end{theorem}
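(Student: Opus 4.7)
The plan is to establish three things in order: (i) each $S \in \mathcal{M}^0_{n,r}$ determines a unique signed matroid $(P, \sigma)$, giving the partition \eqref{eq:Mstrata}; (ii) the stratum $\mathcal{M}^0_{P, \sigma, r}$ is nonempty exactly when $3 \leq r \leq m$ or $r = m = 2$; and (iii) the dimension of a nonempty stratum is given by \eqref{eq:dimformula}. The argument goes through Lemma~\ref{lem:gram}, parametrizing every $S$ as a Gram matrix of lightlike vectors $p^{(i)}$ in $\R^r$.

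For (i), call $i \in [n]$ a \emph{loop of $S$} if the $i$-th row of $S$ is identically zero, and let $L$ be the set of loops. On $[n] \setminus L$ define $i \sim j$ by $s_{ij} = 0$. The sign vector $\sigma$, unique up to global negation, is read off from the non-zero entries using \eqref{eq:signcond}. The core step is showing $\sim$ is transitive; its equivalence classes are then the parts $P_1, \ldots, P_m$ of a rank-two matroid. I plan to extract transitivity from the quartic inequality \eqref{eq:quadruples}: if $s_{ij} = s_{jk} = 0$ with $i,j,k$ non-loop, choose any $l \notin \{i,j,k\}$ with $s_{jl} \neq 0$ (such $l$ exists when $n \geq 4$ because $j$ is a non-loop with $s_{ji} = s_{jk} = 0$; the cases $n \leq 3$ can be checked by hand). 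Substituting $s_{ij} = s_{jk} = 0$ into \eqref{eq:quadruples} collapses the $4 \times 4$ principal minor inequality to $s_{ik}^2 s_{jl}^2 \leq 0$, which forces $s_{ik} = 0$.

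For (ii), the inequality $r \leq m$ follows from a parallelism argument using Lemma~\ref{lem:gram}: if $i, j$ lie in the same part $P_a$ then $p^{(i)}, p^{(j)}$ are lightlike with $p^{(i)} \cdot p^{(j)} = 0$, which forces them to be parallel. Hence all non-loop vectors live in at most $m$ directions, so the span $\mathrm{span}\{p^{(1)}, \ldots, p^{(n)}\}$ has dimension at most $m$, which equals the rank of $S$. In the rank two case, the lightcone in a Lorentzian plane consists of two rays, so we must have $m = 2$. Conversely, when $3 \leq r \leq m$ (or $r = m = 2$) I would construct a representative of $\mathcal{M}^0_{P, \sigma, r}$ by choosing $m$ lightlike directions in $\R^r$ in general position (so that their span is $\R^r$ and any two are non-parallel), assigning one direction to each part, and scaling each non-loop vertex by an arbitrary positive real with the sign $\sigma_i$, with the zero vector for loops.

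For (iii), this parametrization is surjective onto $\mathcal{M}^0_{P, \sigma, r}$ and locally injective modulo the Lorentz group, so I count parameters and subtract symmetries: $m$ points on the projective lightcone of $\R^r$ contribute $m(r-2)$ parameters, the $n - l$ non-loop scalars contribute $n - l$ more, and we quotient by the Lorentz group $O(1, r-1)$ of dimension $\binom{r}{2}$. Genericity of the chosen directions (possible since $m \geq r$) makes the stabilizer trivial, yielding exactly \eqref{eq:dimformula}. The main obstacle is step (i), where transitivity of $\sim$ must be extracted cleanly from the quartic identity in the presence of loops and of non-loop vertices with few non-zero neighbors; the boundary cases and the interaction between support and sign constraints are the subtle points.
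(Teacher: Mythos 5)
Your proposal is correct and follows essentially the same route as the paper: the same decomposition by sign and zero pattern, the same use of the $4\times 4$ principal minor inequality \eqref{eq:quadruples} to show that vanishing of off-diagonal entries is an equivalence relation on the non-loops, the same parallelism argument for the necessity of $r\le m$ (and $m=2$ when $r=2$), and the same generic light-cone construction for sufficiency. The only cosmetic difference is in the dimension count, where you quotient the $m(r-2)+(n-l)$ configuration parameters directly by the $\binom{r}{2}$-dimensional Lorentz group, whereas the paper instead invokes the dimension $m(r-1)-\binom{r}{2}$ of the space of hollow symmetric $m\times m$ matrices of rank $r$ and then adds the $n-l-m$ remaining multipliers; both give \eqref{eq:dimformula}.
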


\begin{proof}
We  decompose $\mathcal{M}^0_{n,r}$ by recording 
the sign matrix ${\rm sign}(S) = [{\rm sign}(s_{ij})] \in \{-,0,+\}^{n \times n}$
for each $S = [s_{ij}]$. If there are no zeros outside of the diagonal in the sign matrix then
$S \in \mathcal{M}^0_{U_n,\sigma, r}$ where $U_n$ is the uniform matroid, and $\sigma_i = {\rm sign}(\lambda_i)$
for the multipliers $\lambda_i$ in (\ref{eq:plambda}).

Suppose now that $S$ is a Mandelstam matrix which has some zero
off-diagonal entries.
We associate a matroid $P$ to $S$ as follows.
If $\lambda_i= 0$ then $i$ is a loop.
Otherwise, suppose $\{i,j\}$ is a pair of non-loops
 with $s_{ij} = 0$. Substituting it into the quartic in (\ref{eq:quadruples}), we find
\begin{equation}
\label{eq:blocksrankone}
{\rm det}(S_{\{i,j,k,l\}}) \,\, = \,\, (s_{ik} s_{jl} - s_{il} s_{jk})^2 \,\, \leq \,\, 0 ,
\quad \hbox{and hence} \quad s_{ik} s_{jl} = s_{il} s_{jk}.
%{\rm det} \begin{bmatrix}
%s_{ik} & s_{jk}  \\
%s_{il} & s_{jl}  \end{bmatrix} \,=\,0.
\end{equation}
Thus, if also $s_{ik} = 0$, then either $s_{il} = 0$ or $s_{jk} = 0$, for all $l$.
Since $i$ is not a loop, there exists an $l$ such that $s_{il} \not= 0$,
and hence $s_{jk}=0$. Thus the relation given by the zeros of $S$
is an equivalence relation on the non-loops. In other words,
we obtain a matroid $P$ whose bases are the pairs $\{i,j\}$ with $s_{ij} \not= 0$.
 As before, we choose $\sigma$ to be the sign vector of $\lambda$. 
 
Now, given a signed matroid $(P,\sigma)$ we want to check when its stratum is non-empty. 
The condition  $r \leq m$ is necessary 
because the factorization of $S$ in (\ref{eq:gram}) has
only $m$ distinct momentum vectors $p^{(i)}$ up to scaling.
But, they span a space of dimension $r$, so we need $m \geq r$ vectors.
If $r=2$ then the light cone consists of two lines, so there are
only $m=2$ distinct momentum vectors $p^{(i)}$ up to scaling.
The case $r=1$ is impossible because no symmetric matrix
of rank one can have zeros on the diagonal. To show that the stated condition is sufficient, we
choose vectors $p^{(1)}, \ldots, p^{(n)}$ on the light cone such that
$p^{(i)} = 0$ if and only if $i$ is a loop in $P$, and
$p^{(i)}$ and $p^{(j)}$ are parallel 
if and only if $i$ and $j$ are parallel in $P$. 
For any $r$ between $3$ and $m$, we can select generic
configurations with this property which span a subspace $\R^{r}$ of $\R^{1+d}$.
For such a configuration, their Gram matrix is a point in
$ \mathcal{M}^0_{P,\sigma,r}$. For $r=m=2$ it suffices to pick two non-parallel vectors on the light cone. 

It remains to prove the dimension formula (\ref{eq:dimformula}).
For this, we observe that each matrix $S$ in $\mathcal{M}^0_{P,\sigma,r}$ has the following structure.
If $i$ is a loop of $P$ then the $i$th row and column are zero.
There is a diagonal block of zeros for each part of parallel elements in $P$.
All off-diagonal blocks are matrices of rank $\leq 1$, by (\ref{eq:blocksrankone}).
Let \underbar{$P$} denote the simple matroid underlying $P$. That is, \underbar{$P$} is the rank $2$ matroid 
obtained by removing all loops and keeping one element per parallelism class from $P$. Since all our matroids have rank two,  \underbar{$P$} is
simply the uniform matroid $U_m$, with one element for each part of $P$.

Every point in ${\cal L}_{\text{\underbar{$P$}},r}$ is
a rank $r$ Mandelstam matrix $T = [t_{\mu,\nu}]$ of size $m \times m$,
with rows and columns indexed by the parts $P_\mu, P_\nu$ of $P$.
From this we obtain the matrices $S$ in $\mathcal{M}^0_{P,\sigma,r}$ by
setting $s_{ij} = t_{\mu,\nu} \lambda_i \lambda_j$
for $i \in P_\mu$ and $j \in P_\nu$, where $\lambda \in \R^P \simeq \R^{n-l}$.

The space of $m\times m$ symmetric matrices  of rank $r$, with zeros on the diagonal, has dimension $m(r-1)-\binom{r}{2}$;
see e.g.~\cite[Theorem 6.1]{GramMatricesIsotropic}.  These are our degrees of freedom for choosing $T$.
When passing from $P$ to  \underbar{$P$},
we put together all parallel vectors in one vector.
So, we must enlarge this number by one dimension for
each multiplier $\lambda_i$ attached to the remaining $n-l-m$ non-loop momentum vectors.
This yields our dimension formula (\ref{eq:dimformula}).
\end{proof}

We now derive an explicit formula  for the number of kinematic strata 
of any given dimension $d$ in $\mathcal{M}^0_{n,r}$.
The aim is to count  signed matroids $P$ which have $m$ 
parts, subject to requiring that $\mathcal{M}^0_{P,\sigma,r}$
 is non-empty and has  dimension $d$.
By (\ref{eq:dimformula}),  the number of loops is 
 $$ l\,\,:=\,\,m(r-2)+n-\binom{r}{2}-d. $$
 The number of parts, $m$, in the matroid $P$ satisfies the following lower and upper bounds:
 $$ r\,\,\leq \,\,m \,\,\leq \,\,\frac{1}{r-1} \Bigl(d+\binom{r}{2}\Bigr) \,=:\,M. $$
  Moreover, for $r=2$, we must have $m=M=2$. 
  Our considerations imply the following formulas for the number of 
  strata by dimension.
 We write $\St{n-l}{m}$ for the {\em Stirling number of the second kind}.
  This is  the number of
    partitions of the set $[n-l]$ into exactly $m$ parts.
 
 \begin{corollary}\label{cor:countingMassless}
 The number of kinematic strata $\mathcal{M}^0_{P,\sigma,r}$ of dimension $d$
 in the Mandelstam region $\mathcal{M}^0_{n,r}$ 
  is given, for a fixed sign vector $\sigma$ or for all possible sign vectors, respectively, by
    \[
        \sum_{m=r}^{M}\binom{n}{l}\St{n-l}{m} \qquad \text{ and } \qquad 
        \textcolor{LightGreen}{\sum_{m=r}^{M}2^{n-l-1}\binom{n}{l}\St{n-l}{m}}.
    \]
\end{corollary}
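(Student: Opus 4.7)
The plan is to read the count directly from Theorem \ref{thm:massless}: since that theorem puts the non-empty strata $\mathcal{M}^0_{P,\sigma,r}$ in bijection with signed matroids $(P,\sigma)$ (subject to non-emptiness), and the dimension depends only on $n$, $r$, and the combinatorial parameters $m$ (number of parts) and $l$ (number of loops), counting strata of dimension $d$ reduces to counting such combinatorial data. I would organize the argument in four short steps.

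First, I would solve the dimension formula (\ref{eq:dimformula}) for $l$: setting $d = m(r-2) + n - l - \binom{r}{2}$ yields $l = m(r-2) + n - \binom{r}{2} - d$, so $l$ is a function of $m$ (with $n$, $r$, $d$ fixed). Hence each admissible $m$ contributes independently to the count.

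Second, I would determine the range of $m$. The lower bound $m \geq r$ is exactly the non-emptiness condition from Theorem \ref{thm:massless} (with the degenerate case $r=m=2$ absorbed). The upper bound comes from feasibility of the partition: we must have $n - l \geq m$, since we are partitioning the $n-l$ non-loops into $m$ nonempty blocks. Substituting the expression for $l$ and rearranging yields $m(r-1) \leq d + \binom{r}{2}$, i.e. $m \leq M$. (One also needs $l \geq 0$; this is automatic in the relevant range.)

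Third, for each admissible $m$, I would count the underlying matroids $P$: choose which $l$ elements are loops in $\binom{n}{l}$ ways, then partition the remaining $n-l$ non-loops into exactly $m$ nonempty blocks in $\St{n-l}{m}$ ways. Summing over $m$ from $r$ to $M$ gives the first formula. Finally, to upgrade to signed matroids, I would note that the sign vector $\sigma$ only enters $S$ through the non-loop entries, since loops contribute zero rows and columns; and $\sigma$ is identified with $-\sigma$. Hence the number of distinct signs per matroid is $2^{n-l-1}$, producing the second formula. The only delicate point is the bookkeeping around the upper bound $M$ and the edge case $r=2$ (where only $m=2$ is admissible), but both are handled uniformly by the inequalities above, so no real obstacle arises.
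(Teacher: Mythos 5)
Your proposal is correct and follows essentially the same route as the paper: solve the dimension formula (\ref{eq:dimformula}) for $l$ as a function of $m$, bound $m$ between $r$ (non-emptiness, with $m=2$ forced when $r=2$) and $M$ (feasibility of partitioning the $n-l$ non-loops into $m$ parts), and count $\binom{n}{l}\St{n-l}{m}$ matroids per admissible $m$, times $2^{n-l-1}$ for the sign vectors on non-loops modulo global negation. The paper states the bounds on $m$ without deriving the upper one; your derivation of $M$ from $n-l\geq m$ is the intended justification.
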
    
    
\begin{example}  \label{ex:counting}
        The numbers of strata for $n=4,5$ are given in two tables. Rows 
        are indexed by         $1\leq d \leq \binom{n}{2}$ and columns
        are indexed          by $2\leq r \leq n$.
                     The numbers in black are for 
                     the Lorentzian region $\mathcal{L}^0_{n,r}$ and the numbers in \textcolor{LightGreen}{green} are for 
                     the Mandelstam region $\mathcal{M}^0_{n,r}$.
        \begin{table}[h]            \begin{subtable}[t]{0.49\textwidth}
                \centering \scalebox{0.8}{
                \begin{tabular}{c|cc|cc|cc}
                    $\mathbf{d}$  / $\mathbf{r}$ & \multicolumn{2}{c|}{$\mathbf{2}$} & \multicolumn{2}{c|}{$\mathbf{3}$} & \multicolumn{2}{c}{$\mathbf{4}$} \\
                    \hline
                    $\mathbf{1}$ & $6$ &\textcolor{LightGreen}{$12$} &&&&\\
                    $\mathbf{2}$ & $12$ &\textcolor{LightGreen}{$48$} &&&&\\
                    $\mathbf{3}$ & $7$ &\textcolor{LightGreen}{$56$} & $4$ &\textcolor{LightGreen}{$16$} &&\\
                    $\mathbf{4}$ &&& $6$ &\textcolor{LightGreen}{$48$} &&\\
                    $\mathbf{5}$ &&& $1$ &\textcolor{LightGreen}{$8$} &&\\
                    $\mathbf{6}$ &&&&& $1$ &\textcolor{LightGreen}{$8$}
                \end{tabular}
                }
                \caption{$n=4$}
            \end{subtable}
            \begin{subtable}[t]{0.49\textwidth}
                \centering     \scalebox{0.8}{
                \begin{tabular}{c|cc|cc|cc|cc}
                    $\mathbf{d}$  / $\mathbf{r}$ & \multicolumn{2}{c|}{$\mathbf{2}$} & \multicolumn{2}{c|}{$\mathbf{3}$} & \multicolumn{2}{c|}{$\mathbf{4}$} & \multicolumn{2}{c}{$\mathbf{5}$} \\
                    \hline
                    $\mathbf{1}$ & $10$ &\textcolor{LightGreen}{$20$} &&&&&&\\
                    $\mathbf{2}$ & $30$ &\textcolor{LightGreen}{$120$} &&&&&&\\
                    $\mathbf{3}$ & $35$ &\textcolor{LightGreen}{$280$} & $10$ &\textcolor{LightGreen}{$40$} &&&&\\
                    $\mathbf{4}$ & $15$ &\textcolor{LightGreen}{$240$}& $30$ &\textcolor{LightGreen}{$240$} &&&&\\
                    $\mathbf{5}$ &&& $30$ &\textcolor{LightGreen}{$440$} &&&&\\
                    $\mathbf{6}$ &&& $10$ &\textcolor{LightGreen}{$160$} & $5$ &\textcolor{LightGreen}{$40$} &&\\
                    $\mathbf{7}$ &&&$1$ &\textcolor{LightGreen}{$16$}& $10$ &\textcolor{LightGreen}{$160$} &&\\
                    $\mathbf{8}$ &&&&&&&&\\
                    $\mathbf{9}$ &&&&& $1$ &\textcolor{LightGreen}{$16$} &&\\
                    $\mathbf{10}$ &&&&&&& $1$ &\textcolor{LightGreen}{$16$}
                \end{tabular} }
                \caption{$n=5$}
            \end{subtable}
            \caption{Counting kinematic strata.}
            \label{tb:countingKinematicStrata}
        \end{table}

    \end{example}
    
The set of all matroids on $[n]$ is a partially ordered set (poset).
In this poset, we have
$P \leq P'$ if every~loop of $P'$ is a loop in $P$, and
the partition $P'$ refines the partition $P$.
For this refinement, one removes loops of $P$ that are non-loops in $P'$.
The order relation corresponds to containment of matroid polytopes,
which, for rank $2$ matroids, is equivalent to the one used in \cite{BHKL}.
This poset structure extends naturally to signed matroids:
we have $(P,\sigma) \leq (P',\sigma')$ if and only if $P \leq P'$ and
  $\sigma=\sigma'$ for all non-loops of $P$. 

For any fixed rank $r$, we consider the restriction of this
poset to signed matroids $(P,\sigma)$ for which 
$\mathcal{M}^0_{P,\sigma,r}$ is non-empty.
In Section \ref{sec4} we shall see that this subposet 
is precisely the incidence relation among
the closures of the kinematic strata in the Mandelstam region $\mathcal{M}^0_{n,r}$.
We conclude this section by offering a preview of the $n=4$ strata in Table \ref{tb:countingKinematicStrata} above.

\begin{example}[$n=4, \sigma=+\!+\!+ +$]\label{ex:LorentzianPosetn4}
We discuss all kinematic strata of $\mathcal{L}^0_{4,r}$ for $r=4,3,2$. Our ambient space is $\R^6$.
 For $r=4$, the only stratum is $\mathcal{L}^0_{U_4,4}=\{(s_{ij})\in (\R_{>0})^6: \det(S)<0\}$.
  Indeed, if one matrix entry is $0$ then ${\rm det}(S)$ is a square and hence $\det(S)\geq 0$.
  For $r=3$ we restrict to the quartic hypersurface
    $\{\det(S)=0\}$ in $\R^6$. This has $11$ strata which come in three classes.
    On the $5$-dimensional stratum $\mathcal{L}^0_{U_4,3}$ all $s_{ij}$ are positive.
    This stratum has three connected components. On each component,
    precisely one of the  three last factors in (\ref{eq:factorsasfollows})    is zero.
    They meet along six $4$-dimensional strata, where precisely one $s_{ij}$ is zero.
On their boundaries, $i$ or $j$ can become a loop.
The  four $3$-dimensional strata are given by the four
matroids with $m=3$ and $l = 1$. Modulo scaling rows and columns of $S$,
the strata have dimensions $2,1,0$. Figure \ref{k4graph} gives an illustration.
The red square represents three connected components, each glued into
the complete graph $K_4$ along one of the three~$4$-cycles.

For $r=2$, there are $7=3+4$ strata of top dimension $3$,
given by the set partitions of~$[4]$ with two parts.
We obtain $12$ strata of dimension $2$, and $6$ strata of dimension $1$,
 by turning one or two of the elements in $[4]$ into loops.
Figure \ref{12-34poset} depicts one order ideal in our poset
for $r=2$, namely all strata that lie below the top stratum
$\mathcal{L}^0_{P,2}$ where $P = \{1,2\} \sqcup \{3,4\}$.
\end{example}
    
    \begin{figure}[h] \vspace{-0.2cm}
    \begin{subfigure}{0.44\textwidth}
        \centering 
        \includegraphics[width=\textwidth]{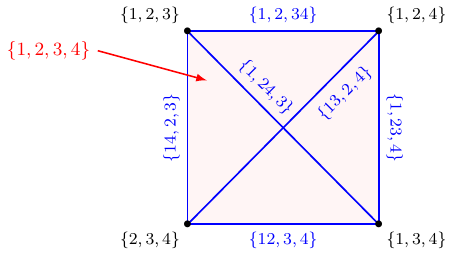} \vspace{-0.2cm}
            \caption{$\mathcal{L}_{4,3}$}
            \label{k4graph}
    \end{subfigure}\hfill%
    \begin{subfigure}{0.4\textwidth}
        \centering
        \includegraphics[width=\textwidth]{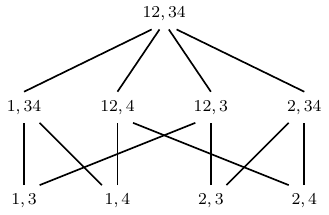} \vspace{-0.2cm}
        \caption{$\overline{\mathcal{L}_{(12,34),2}}$}
        \label{12-34poset}
    \end{subfigure}
	\caption{Posets of matroids.}
\end{figure}

\section{Inclusions and Topology}
\label{sec4}

We now turn to topological aspects of the kinematic stratifications 
for the Lorentzian region $\mathcal{L}^0_{n,r}$, resp.~the Mandelstam region $\mathcal{M}^0_{n,r}$.
We already know that the strata are indexed by the poset of matroids, resp.~signed matroids.
Our first result states that these posets indeed correspond to the inclusions among 
closures of the strata. The stratification of $\mathcal{L}^0_{n,r}$ is thus a
special case of the matroid decomposition for multiaffine Lorentzian polynomials of arbitrary
degree, which was studied recently by Baker, Huh, Kummer and Lorscheid~\cite{BHKL}.
However, in our case of quadratic polynomials, the stratification is nicer
than that for higher degree.

\begin{proposition} \label{prop:nicer}
Consider two non-empty kinematic strata
     $\mathcal{M}^0_{P,\sigma,r}$ and $\mathcal{M}^0_{P',\sigma',r}$. Then, we have
      $\,\mathcal{M}^0_{P,\sigma,r} \subseteq \overline{\mathcal{M}^0_{P',\sigma',r}} \,$ if and only if
    $\,\mathcal{M}^0_{P,\sigma,r} \cap \overline{\mathcal{M}^0_{P',\sigma',r}} \not= \emptyset\,$ 
    if and only if $\,(P,\sigma) \leq (P',\sigma')$.
\end{proposition}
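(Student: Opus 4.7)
The plan is to prove the three equivalent statements in a cycle $(1) \Rightarrow (2) \Rightarrow (3) \Rightarrow (1)$. The first step is immediate: since the stratum $\mathcal{M}^0_{P,\sigma,r}$ is non-empty by assumption, any point of it lies in the intersection $\mathcal{M}^0_{P,\sigma,r} \cap \overline{\mathcal{M}^0_{P',\sigma',r}}$. The substance is in the other two steps, which translate the poset condition on signed matroids into the limiting and factorization structure of massless Mandelstam matrices.

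For $(2) \Rightarrow (3)$, I would fix a point $S \in \mathcal{M}^0_{P,\sigma,r}$ together with a sequence $S_n \in \mathcal{M}^0_{P',\sigma',r}$ converging to $S$, and verify each part of the definition of $(P,\sigma) \leq (P',\sigma')$ in turn. First, if $i$ is a loop of $P'$ then $(S_n)_{ij} = 0$ for every $j$ and every $n$, so $S_{ij} = 0$ for every $j$, making $i$ a loop of $P$ as well; hence $L_{P'} \subseteq L_P$. Second, for $i,j \in [n] \setminus L_P$ lying in different parts of $P$, the entry $S_{ij}$ is non-zero, so $(S_n)_{ij} \neq 0$ eventually, forcing $\{i,j\}$ to be a basis of $P'$; contrapositively, the partition $P'$ restricted to $[n] \setminus L_P$ refines $P$. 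Third, continuity of signs gives $\sigma_i \sigma_j = \sigma'_i \sigma'_j$ whenever $\{i,j\}$ is a basis of $P$; because $P$ has $m \geq 2$ non-empty parts, its basis graph on $[n] \setminus L_P$ is a connected complete multipartite graph, so after normalizing the global sign we conclude $\sigma = \sigma'$ on $[n] \setminus L_P$.

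For $(3) \Rightarrow (1)$, I would produce an explicit approaching family. By Lemma \ref{lem:gram}, $S$ is the Gram matrix of momentum vectors $p^{(i)} = \lambda_i (1, x^{(i)})$, where $p^{(i)} = 0$ when $i$ is a loop of $P$, $x^{(i)} = x^{(j)}$ when $i,j$ are parallel in $P$, and the non-loop $p^{(i)}$ span a subspace $V \subseteq \R^{1+d}$ of Lorentz signature $(1, r-1)$. For each part $P'_\nu$ of $P'$ pick a continuous family $z_\nu(\epsilon)$ of unit vectors in the spacelike part of $V$ such that $z_\nu(0) = x^{(j)}$ whenever $P'_\nu$ contains a non-loop $j$ of $P$, and such that for $\epsilon > 0$ the $z_\nu(\epsilon)$ are pairwise distinct and in general position on the light cone of $V$. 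Set $q^{(i)}_\epsilon = 0$ for $i \in L_{P'}$, and $q^{(i)}_\epsilon = \mu_i(\epsilon) (1, z_\nu(\epsilon))$ for $i \in P'_\nu$, with $\mu_i(\epsilon) = \lambda_i$ when $i$ is a non-loop of $P$ and $\mu_i(\epsilon) = \epsilon \sigma'_i$ when $i \in L_P \setminus L_{P'}$. The Gram matrix $S_\epsilon$ of this configuration converges to $S$ as $\epsilon \to 0$, has the sign pattern and zero pattern prescribed by $(P',\sigma')$, and since the $m' \geq r$ distinct directions $(1, z_\nu(\epsilon))$ span $V$, has rank exactly $r$; so $S_\epsilon \in \mathcal{M}^0_{P',\sigma',r}$.

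The main obstacle is the rank constraint in $(3) \Rightarrow (1)$: a naive perturbation of the momentum vectors typically pushes the rank above $r$. The resolution is to confine the entire deformation to the intrinsic subspace $V$ already determined by the original configuration. The projectivized light cone of $V$, an $(r-2)$-sphere, is precisely the moduli space needed to accommodate any refinement of $P$ into $m' \leq \dim V$ parallelism classes. The degenerate case $r = 2$ is handled separately: the light cone of $V$ has only two rays, forcing $m' = 2$, so no parts get split and the perturbation merely restores nonzero multipliers for the elements of $L_P \setminus L_{P'}$.
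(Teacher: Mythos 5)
Your proposal is correct and follows essentially the same route as the paper: the trivial first implication, continuity of signs for $(2)\Rightarrow(3)$, and an explicit perturbation of the light-cone realization $p^{(i)}=\lambda_i(1,x^{(i)})$ for $(3)\Rightarrow(1)$. Your version is somewhat more careful on two points the paper treats tersely — recovering $\sigma=\sigma'$ from the products $\sigma_i\sigma_j$ via connectivity of the basis graph, and preserving the rank $r$ by confining the perturbation to the $r$-dimensional span $V$ — but these are refinements of the same argument, not a different one.
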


\begin{proof}
The first statement clearly implies the second statement.
    The order relation $(P,\sigma) \leq (P',\sigma')$ among signed matroids
    means that the non-zero entries of a matrix in $\mathcal{M}^0_{P,\sigma,r}$ have the same sign pattern as the matrices in $\mathcal{M}^0_{P',\sigma',r}$. Since the signs of the entries $s_{ij}$ are weakly preserved
    under taking limits of matrices, the second statement implies the third statement.

Now suppose $\,(P,\sigma) \leq (P',\sigma')$,
    and let $S$ be any Mandelstam matrix in $ \mathcal{M}^0_{P,\sigma,r}$.
We must show that $S$ is the limit of a sequence of matrices in $\mathcal{M}^0_{P',\sigma',r}$.
Without loss of generality, we assume that $S$ has non-negative entries,
so we prove our claim for $ \mathcal{L}^0_{P,r}$ and $\mathcal{L}^0_{P',r}$.

Recall that $S$ is realized by a collection of vectors $p^{(i)}$ in the
positive nappe of the light cone. If $i$ is a loop in $P$ but not in $P'$, then
$p^{(i)} =0$. We can replace it by a nearby vector that is non-zero
and parallel to the other vectors in its part in $P'$.
Every part $P_j$ of $P$ is a union of parts of $P'$. We
perturb the vectors $p^{(k)}$ for $k \in P_j$ such that the
perturbed vectors are parallel according to the parts of $P'$.
The resulting matrix $S'$ can be chosen arbitrarily close to $S$.
In this manner, we construct a sequence which shows
that $S$ is in $\overline{\mathcal{L}^0_{P',r}}$.
\end{proof}

We now discuss the stratifications and the topology of the strata in more detail.
We  proceed by increasing rank, starting with $r=2$.
The Mandelstam region $\mathcal{M}^0_{n,2}$ consists of all 
symmetric $n \times n$ matrices of rank $2$
that have zeros on the diagonal. This region is~the variety in $\R^{\binom{n}{2}}$ defined by
the ideal of $3 \times 3$ minors in the polynomial ring with $\binom{n}{2} $ unknowns $s_{ij}$.
We know from \cite[Proposition 2.3]{GramMatricesIsotropic}
that this ideal is radical, and it is the intersection of 
$2^{n-1}-1$ toric ideals, one for each of the 
partitions $P = P_1 \sqcup P_2$ of $[n]$ into two 
non-empty~parts:
\begin{equation}
\label{eq:ideal} \bigcap_P \, \biggl(
\bigl\langle s_{ij} \,: \, i,j \in P_1 \,\,\,{\rm or} \,\,\,i,j \in P_2 \,\bigr\rangle
\,+\,
\bigl\langle\, s_{ik} s_{il} - s_{il} s_{jk} \,\,:\,\,
i,j \in P_1\, \,{\rm and} \,\, k,l \in P_2\, \bigr\rangle \biggr).
\end{equation}
Here $P$ runs over all loopless matroids 
on $[n]$ with precisely two parts.
The Mandelstam region  $\mathcal{M}^0_{n,2}$ is
the real algebraic variety defined by the determinental ideal in (\ref{eq:ideal}).
 The Lorentzian region 
$\mathcal{L}^0_{n,2}$ is the non-negative part of this affine variety.
The prime ideals above define the $2^{n-1}-1$
maximal strata $\mathcal{L}_{n,P}^0$.
The ideals of lower-dimensional strata 
are toric as well: they are ideal sums 
of subsets of the minimal primes in (\ref{eq:ideal}).
In particular, each stratum is a positive toric variety.
Namely, the stratum indexed by a matroid 
$P = P_1 \sqcup P_2$ 
is the positive part of a product of two projective
spaces $\PP^{|P_1|-1} \times \PP^{|P_2|-1}$.
Using the moment map, this is identified with the
corresponding product of two simplices, namely the polytope
\begin{equation}
\label{eq:toblerone} \Delta_{|P_1|-1}\, \times \,\Delta_{|P_2|-1} . 
\end{equation}
The kinematic strata in $\mathcal{M}^0_{n,2}$ arise from
these polytopes by choosing a sign vector~$\sigma$.
The points in the stratum $\mathcal{M}^0_{P,\sigma,2}$ are $n \times n$ matrices
of rank $2$ which  have a block structure:
\begin{equation} \label{eq:block1}
 S \,\,\,=\,\,\,
 \begin{bmatrix}
 0 & \Lambda & 0 \\
 \Lambda^T & 0 & 0 \\
 0 & 0 & 0 \end{bmatrix}.
\end{equation}
The three blocks are indexed by $P_1,P_2$, and the loops $[n] \backslash P$.
The matrix $\Lambda$ has rank one, and its entries
have fixed signs $+$ or $-$. In summary, and in view of Corollary \ref{cor:countingMassless}, we conclude:

\begin{corollary}\label{cor:rank2toric}
The Lorentzian region $\mathcal{L}^0_{n,2}$ 
has precisely  $(2^d-1) \binom{n}{d+1}$ strata
of dimension~$d$. Each of these is a cone over the polytope (\ref{eq:toblerone}), so
 $d = n-l-1= |P_1|+ |P_2|-1$.
The Mandelstam region $\mathcal{M}^0_{n,2}$ 
has  $(2^{2d}-2^d) \binom{n}{d+1}$ strata (\ref{eq:block1})
of dimension $d$.
Here $d= 1, 2,\ldots ,n{-}1$.
\end{corollary}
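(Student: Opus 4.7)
The plan is to derive both parts as direct specializations of Corollary~\ref{cor:countingMassless} at $r=2$, combined with the explicit block form (\ref{eq:block1}) for the geometric description. The argument breaks into (i) a combinatorial count of signed matroids producing strata of dimension $d$, and (ii) an identification of each stratum with a cone over a product of two simplices, using the rank-one structure of the off-diagonal block.

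For (i), I would first record that when $r=2$ the paragraph preceding Corollary~\ref{cor:countingMassless} forces $m = M = 2$, so every non-empty stratum is indexed by a signed matroid whose non-loops split into exactly two parts $P = P_1 \sqcup P_2$. The dimension formula (\ref{eq:dimformula}) then reduces to $\dim \mathcal{M}^0_{P,\sigma,2} = n - l - 1$, which matches $|P_1|+|P_2|-1$ since $|P_1|+|P_2| = n - l$. Setting $d = n - l - 1$, i.e.\ $l = n - d - 1$, the Lorentzian count specializes Corollary~\ref{cor:countingMassless} to $\binom{n}{l}\St{n-l}{2} = \binom{n}{d+1}(2^d - 1)$: choose the loop set in $\binom{n}{d+1}$ ways, then partition the $d+1$ non-loops into two unordered non-empty blocks in $\St{d+1}{2} = 2^d-1$ ways. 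For the Mandelstam count one multiplies by $2^{n-l-1} = 2^d$, the number of sign assignments on the non-loops modulo the involution $\sigma \sim -\sigma$, producing $(2^{2d} - 2^d)\binom{n}{d+1}$.

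For (ii), I would read off from (\ref{eq:block1}) that a matrix in $\mathcal{L}^0_{P,2}$ is determined by a positive rank-one $|P_1|\times|P_2|$ matrix $\Lambda = \lambda\mu^T$. The factorization $(\lambda,\mu) \in \R^{|P_1|}_{>0}\times\R^{|P_2|}_{>0}$ is unique up to the rescaling $(\lambda,\mu)\mapsto(t\lambda,t^{-1}\mu)$ with $t > 0$, so the stratum is a free $\R_{>0}$-bundle over the positive part of $\PP^{|P_1|-1}\times\PP^{|P_2|-1}$. The moment map identifies this positive part with $\Delta_{|P_1|-1}\times\Delta_{|P_2|-1}$, exhibiting $\mathcal{L}^0_{P,2}$ as a cone over (\ref{eq:toblerone}) of dimension $|P_1|+|P_2|-1 = d$. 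The range $d = 1, \ldots, n-1$ is immediate: the extremes correspond to $|P_1|=|P_2|=1$ with $l = n-2$ and to a loopless 2-part partition of $[n]$, respectively. I do not expect any substantive obstacle here; the one point worth tracking carefully is the $\sigma \sim -\sigma$ identification, which is precisely what yields the clean factor $2^d$ rather than $2^{d+1}$ in the Mandelstam count, and which should be checked against the green formula of Corollary~\ref{cor:countingMassless} before concluding.
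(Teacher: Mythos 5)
Your proposal is correct and follows essentially the same route as the paper: the counts are obtained by specializing Corollary~\ref{cor:countingMassless} at $r=2$ (where $m=M=2$ forces $l=n-d-1$, giving $\binom{n}{d+1}\St{d+1}{2}=(2^d-1)\binom{n}{d+1}$ and the extra factor $2^{n-l-1}=2^d$ for the signed case), and the cone-over-$\Delta_{|P_1|-1}\times\Delta_{|P_2|-1}$ description comes from the rank-one block $\Lambda$ in (\ref{eq:block1}) and the moment map, exactly as in the discussion preceding the corollary. Your explicit tracking of the $\sigma\sim-\sigma$ identification and of the $\R_{>0}$-scaling $(\lambda,\mu)\mapsto(t\lambda,t^{-1}\mu)$ supplies the details the paper leaves implicit.
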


\begin{example}[$n=4,r=2$]
The ideal of $3 \times 3$ minors is the 
intersection  (\ref{eq:ideal})  of seven prime ideals in
$\R[s_{12},s_{13},s_{14},s_{23},s_{24},s_{34}]$.
Viewed projectively, the variety $\mathcal{M}^0_{4,2}$
is a surface, glued from four copies of $\PP^2$ and
three copies of $\PP^1 \times \PP^1$. The 
polyhedral surface corresponding to $\mathcal{L}^0_{4,2}$ is
glued from four triangles and three squares.
To visualize this surface, we label the six vertices of an octahedron
with $s_{12},\ldots,s_{34}$, we retain the $12$ edges, and
we glue  four of the facets to the three squares that span symmetry planes.
 This explains the f-vector $(6,12,7)$ we saw in Example \ref{ex:counting}.
 Figure \ref{12-34poset} shows the face poset for one of the three squares.
\end{example}

We now increase the rank by one, and we consider the case $r=3$.
These kinematic stratifications
exhibit a new phenomenon that is 
noteworthy: the strata can be disconnected.

\begin{proposition} \label{prop:disconnected}
For any signed matroid $(P,\sigma)$, with $m$ parts,
$\mathcal{M}^0_{P,\sigma,3}$
has $(m-1)!/2$ connected components.
In particular, $\mathcal{L}^0_{U_n,3}$ has $(n-1)!/2$ connected components.
\end{proposition}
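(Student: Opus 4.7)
The plan is to associate to each matrix $S \in \mathcal{M}^0_{P,\sigma,3}$ a cyclic ordering of the $m$ parts of $P$ on the circle, well-defined up to reversal, and to show that this invariant classifies the connected components. Since conjugation $S \mapsto JSJ$ with $J = \mathrm{diag}(\sigma)$ gives a homeomorphism between $\mathcal{M}^0_{P,\sigma,3}$ and the Lorentzian stratum $\mathcal{L}^0_{P,3}$, it suffices to treat the all-plus case.

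By Lemma \ref{lem:gram}, a matrix $S \in \mathcal{L}^0_{P,3}$ is the Gram matrix of $n$ momentum vectors on the upper nappe of the light cone in $\R^3$. Normalizing $p^{(i)} = \lambda_i(1, x_{(i)})$ with $x_{(i)} \in S^1$, each part $P_\mu$ of $P$ determines a single point $x_\mu \in S^1$ (parallel vectors have identical normalized directions), and the $m$ points $x_1, \ldots, x_m$ are pairwise distinct. Two configurations yield the same Gram matrix if and only if they are related by the Lorentz group $O(1,2)$, whose action on the projective upper light cone $S^1$ factors through $PGL(2,\R)$: the identity component $PSL(2,\R)$ acts by orientation-preserving Mobius transformations and hence preserves cyclic order, while the other component reverses it. This defines an invariant $\rho(S)$, namely the cyclic order of $(x_1, \ldots, x_m)$ modulo reversal, which takes $(m-1)!/2$ distinct values (noting that $m \geq 3$ by Theorem \ref{thm:massless}).

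Three verifications will complete the proof. First, $\rho$ is locally constant on the stratum: any continuous path $S(t)$ lifts locally to a continuous path of configurations, and since the $x_\mu(t)$ remain pairwise distinct on $S^1$ throughout the stratum, their cyclic order cannot jump. Second, every value of $\rho$ is attained: pick $m$ points on $S^1$ in any desired cyclic order, choose positive multipliers $|\lambda_i|$, and compute the resulting Gram matrix. Third, each fiber of $\rho$ is connected: the configurations with a fixed cyclic order form the product of an open connected cell in $(S^1)^m$ with $\R_{>0}^{n-l}$, and quotienting by the connected group $PSL(2,\R)$, which acts freely on configurations of $\geq 3$ distinct points, preserves connectedness. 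The main obstacle is this last step; it is handled by the sharp $3$-transitivity of $PSL(2,\R)$ on $S^1$, which ensures the quotient within each cyclic order is a connected moduli space of the expected dimension $(m-3) + (n-l)$, matching $\dim \mathcal{M}^0_{P,\sigma,3} = m - 3 + n - l$ from Theorem \ref{thm:massless}. Combining the three verifications yields the desired count, and specializing to $P = U_n$ gives the $(n-1)!/2$ connected components of $\mathcal{L}^0_{U_n,3}$.
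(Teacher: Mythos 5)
Your proof is correct and follows essentially the same route as the paper: realize $S$ by $m$ distinct points on the circle $\mathbb{S}^1$ (plus positive multipliers), and show that the cyclic order of the parts, modulo reversal, is a complete invariant of the connected component, giving $(m-1)!/2$ components. You supply considerably more detail than the paper's two-line argument (the $O(1,2)$-action on the projective light cone, local constancy, surjectivity, and connectivity of the fibers); the only nit is that sharp $3$-transitivity on $\mathbb{S}^1\cong\R\PP^1$ belongs to $PGL(2,\R)$, while $PSL(2,\R)$ is simply transitive on positively oriented triples --- which is exactly what your fiber-connectivity step actually uses.
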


\begin{proof}
We use the representation in (\ref{eq:plambda}),
with $d=r-1=2$. Up to scaling by multipliers $\lambda_1,\ldots,\lambda_n$, 
the $n$ momentum
vectors are $m$ points $x^{(i)}$ that lie on a unit circle $\mathbb{S}^1$.
The connected components of $\mathcal{M}^0_{P,\sigma,3}$ 
correspond to the combinatorially distinct ways of placing
 $m$ points on the circle $\mathbb{S}^1$.
The number of such cyclic arrangements is $(m-1)!/2$.
\end{proof}

We now state a general theorem, valid for any $r \geq 3$, about the
topology of the kinematic strata.
For~this, we soften our rank conditions and take the union of all strata
up to a given matrix rank $r$. The signed matroid
  $(P, \sigma)$ is still fixed. 
Thus we consider the enlarged strata
\begin{equation}
\label{eq:enlarged}
\mathcal{L}^0_{P,\leq r}\,=\, \bigsqcup_{r'\leq r} \mathcal{L}^0_{P,r'} \quad
 \text{ and }\quad \mathcal{M}^0_{P,\sigma,\leq r} \, = \,\bigsqcup_{r'\leq r} \mathcal{M}^0_{P,\sigma,r'}.
 \end{equation}
Theorem \ref{thm:massless} implies
$\mathcal{M}^0_{P,\sigma,3}=\mathcal{M}^0_{P,\sigma,\leq 3}$.
But, for $r \geq 4$, the unions in (\ref{eq:enlarged}) are non-trivial.

We saw in the proof of Lemma \ref{lem:gram}
that any Mandelstam matrix $S = [s_{ij}]$ in $\mathcal{M}^0_{n,\leq r}$~can be realized by $n$ multipliers
$\lambda_1,\ldots,\lambda_n$ plus a  configuration of
$m$ distinct points $x^{(\nu)}$ on the~sphere
$$ \mathbb{S}^{r-2} \,\, = \,\, \partial \mathbb{B}^{r-1} \,\, = \,\, \{ x \in \R^r \,: \,||x|| = 1 \}. $$
Namely, generalizing the block decomposition in (\ref{eq:block1}),
any Mandelstam matrix has the form
\begin{equation} \label{eq:block2}
S \,\,\, = \,\,\, \begin{small}
\begin{bmatrix} \,\,0 & \Lambda_{12} & \cdots & \Lambda_{1m} & 0 \\
\,\,\Lambda_{12}^T & 0 &  \cdots & \Lambda_{2m} & 0 \\
\,\, \vdots & \vdots & \ddots & \vdots & \vdots &  \\
\,\, \Lambda_{1m}^T & \Lambda_{2m}^T & \cdots & 0 & 0 \\
\,\, 0 & 0 & \cdots & 0 & 0 \\
\end{bmatrix}. \end{small}
\end{equation} 
Here the block $\Lambda_{\mu \nu}$ is a rank one matrix with non-zero entries.
The rows of $\Lambda_{\mu \nu}$ are labeled by $P_\mu$, the columns 
 of $\Lambda_{\mu \nu}$  are labeled by $P_\nu$,
and the entries are $s_{ij} = \lambda_i \lambda_j t_{\mu,\nu}$ for all 
$i \in P_\mu$ and $j \in P_\nu$. The Greek letters $\mu,\nu$
now index the parts of the matroid $P$.
Finally,
$$ t_{\mu,\nu} \,\, = \,\, 1 - \langle x^{(\mu)} , x^{(\nu)} \rangle, $$
which is at least $0$ and at most $2$.

Following \cite{FN, FZ}, we now introduce the  {\em ordered configuration space}
$F(\mathbb{S}^{r-2},m)$ for  $m$ distinct labeled points
$x^{(1)}, x^{(2)},\ldots, x^{(m)}$  on the $(r-2)$-sphere
$\mathbb{S}^{r-2}$. The orthogonal group ${\rm O}(r-1)$
acts naturally on this space, and we are interested in the quotient space,
denoted
\begin{equation} \label{eq:confspace} F(\mathbb{S}^{r-2},m)/{\rm O}(r-1). \end{equation}
We call this the {\em orbit configuration space} for $m$ points on $\mathbb{S}^{r-2}$.
The quantities $t_{\mu,\nu}$ furnish coordinates on that space.
They are invariant under ${\rm O}(r-1)$, and they characterize
the configuration uniquely up to rotations and up to the reflection given by negating all coordinates in $\R^{r-1}$. There is a natural map from
any Mandelstam stratum $\mathcal{M}^0_{P,\sigma,\leq r}$
to the orbit configuration space (\ref{eq:confspace}).
This map takes each Mandelstam matrix $S$ to the normalized matrix where each block
$\Lambda_{\mu,\nu}$ is simply the constant matrix with entry $t_{\mu,\nu}$.

The group $(\R_{> 0})^n$ acts on  $\mathcal{M}^0_{P,\sigma,\leq r}$
by scaling the rows and columns of $S$ with the multipliers
$\lambda_1,\ldots,\lambda_n$. The map above is the quotient map,
and it induces a homotopy equivalence. We have derived the following result
on the topology of the strata in the Mandelstam region.

\begin{theorem} \label{thm:topology} The kinematic stratum $\mathcal{M}^0_{P,\sigma,\leq r}$,
for a matroid $P$ with $m$ parts, is homotopy equivalent to 
the orbit configuration space $F(\mathbb{S}^{r-2},m)/{\rm O}(r-1)$
for $m$ points on the sphere.
\end{theorem}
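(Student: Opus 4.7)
The plan is to realize $\mathcal{M}^0_{P,\sigma,\leq r}$ as a trivial principal bundle with fibre $(\R_{>0})^{n-l}$ over the orbit configuration space $F(\mathbb{S}^{r-2},m)/O(r-1)$. Since the fibre is contractible, the projection is then a homotopy equivalence. Concretely, I would invert the parametrization already sketched before the theorem.

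The first step is to assemble this parametrization into an explicit map
\[
    \Psi \,:\, (\R_{>0})^{n-l} \,\times\, F(\mathbb{S}^{r-2},m) \,\longrightarrow\, \mathcal{M}^0_{P,\sigma,\leq r},
\]
sending $(\lambda, x)$ to $S$ with entries $s_{ij} = \sigma_i \sigma_j \lambda_i \lambda_j \bigl(1 - \langle x^{(\mu(i))}, x^{(\mu(j))}\rangle\bigr)$ for non-loops $i,j$, and zero otherwise. Continuity is immediate. Surjectivity follows from Lemma \ref{lem:gram} together with the block structure \eqref{eq:block2}: every $S$ in the stratum is the Gram matrix of light-cone vectors $p^{(i)} = \sigma_i \lambda_i(1, x^{(\mu(i))})$ for suitable multipliers and sphere points.

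Next, I would identify the fibres of $\Psi$. If $\Psi(\lambda,x) = \Psi(\lambda',x')$, then the two associated collections of momentum vectors have identical Lorentzian Gram matrices, so by Witt's extension theorem they differ by an element of $O(1,r-1)$. Both collections are written in the same time frame, with first coordinates $\sigma_i \lambda_i$ and $\sigma_i \lambda_i'$ respectively, so the Lorentz transformation in question must preserve the time axis and thus lies in the little group $O(r-1)$. This forces $\lambda = \lambda'$ and $x'^{(\mu)} = R\, x^{(\mu)}$ for a single $R \in O(r-1)$ acting diagonally on the configuration. Therefore $\Psi$ descends to a continuous bijection
\[
    \overline{\Psi} \,:\, (\R_{>0})^{n-l} \,\times\, \bigl(F(\mathbb{S}^{r-2},m)/O(r-1)\bigr) \,\longrightarrow\, \mathcal{M}^0_{P,\sigma,\leq r}.
\]
Using compactness of $O(r-1)$ and properness of the action, $\overline{\Psi}$ is a homeomorphism; projection to the second factor is then a homotopy equivalence because $(\R_{>0})^{n-l}$ is contractible.

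The main obstacle will be the fibre analysis at configurations where the points $x^{(\mu)}$ span only a proper subspace of $\R^{r-1}$, which correspond to matrices $S$ of rank strictly less than $r$ inside $\mathcal{M}^0_{P,\sigma,\leq r}$. At such points the $O(r-1)$ action on the configuration has non-trivial stabilizer, so $F(\mathbb{S}^{r-2},m)/O(r-1)$ acquires orbifold-like singularities, and one must check that the corresponding redundancies on the Mandelstam side exactly match them. A sharp form of Witt's theorem for possibly degenerate Gram matrices handles this, but the careful bookkeeping of stabilizers is the subtle technical step in upgrading the surjection $\Psi$ to a homeomorphism across the entire extended stratum.
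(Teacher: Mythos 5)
Your strategy (exhibit the stratum as a bundle with contractible fibre over the orbit configuration space) is the same as the paper's, but your fibre analysis contains a genuine error that kills the claimed homeomorphism. The step ``the Lorentz transformation in question must preserve the time axis and thus lies in the little group $O(r-1)$'' is false: Witt's theorem only produces an element of $O(1,r-1)$, and a boost does not fix the time axis, yet it carries one collection of light-cone vectors of the form $\sigma_i\lambda_i(1,x^{(i)})$ to another collection of the same form with \emph{different} multipliers and \emph{different} sphere points. Concretely, for $r=3$ the pairs $p^{(1)}=(1,1,0),\,p^{(2)}=(1,-1,0)$ and $p'^{(1)}=(2,2,0),\,p'^{(2)}=(\tfrac12,-\tfrac12,0)$ have identical Gram matrices but $\lambda=(1,1)\neq(2,\tfrac12)=\lambda'$; they are related by a boost in the $(0,1)$-plane. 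Hence $\Psi(\lambda,x)=\Psi(\lambda',x')$ does not force $\lambda=\lambda'$, the fibres of $\Psi$ are orbits of the full orthochronous group $O^{+}(1,r-1)$ acting conformally on the data $(\lambda,x)$, and $\overline{\Psi}$ is not injective. A dimension count confirms that no such homeomorphism can exist: your source has dimension $(n-l)+m(r-2)-\binom{r-1}{2}$, while by \eqref{eq:dimformula} the target has dimension $(n-l)+m(r-2)-\binom{r}{2}$, smaller by exactly $r-1=\dim O(1,r-1)-\dim O(r-1)$.

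This discrepancy is precisely why the theorem asserts only a homotopy equivalence. The correct bookkeeping is: the Gram map identifies the stratum with the quotient of $(\R_{>0})^{n-l}\times F(\mathbb{S}^{r-2},m)$ by $O^{+}(1,r-1)$, and the further passage to $F(\mathbb{S}^{r-2},m)/O(r-1)$ has contractible ``fibres'' built from the multipliers together with the coset space $O^{+}(1,r-1)/O(r-1)$, a hyperbolic space. The paper's argument packages this as the normalization of each block $\Lambda_{\mu\nu}$ to the constant $t_{\mu\nu}=1-\langle x^{(\mu)},x^{(\nu)}\rangle$ followed by collapsing the contractible scaling redundancy; in any rigorous write-up the non-compact boost directions must be retracted away, not declared absent. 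The difficulty you flag in your final paragraph (stabilizers at non-spanning configurations) is real but secondary; the missing idea is the boost redundancy, and until it is accounted for the proof does not go through.
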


This explains our findings for $r=3$ in  Proposition \ref{prop:disconnected}.
The orbit configuration space (\ref{eq:confspace}) for
$m$ points on the circle $\mathbb{S}^1$ is the union of
$(m-1)!/2$ contractible spaces, one for each of the
distinct arrangements of the $m$ points $x^{(\nu)}$ on the circle.
The kinematic stratum $\mathcal{M}^0_{P,\sigma,\leq 3}$ is homotopy
equivalent to that space: it has the homotopy type of $(m-1)!/2$ isolated points.

We now turn to the case of $r=4$, which is relevant
to describe the real world.
Every stratum $\mathcal{M}^0_{P,\sigma,\leq 4}$ has the homotopy type
of (\ref{eq:confspace}) for $m$ points on the $2$-dimensional sphere~$\mathbb{S}^{2}$.
The stratum is connected but its topology is very interesting.
Following Feichtner and Ziegler \cite[\S 2]{FZ}, we identify
$\mathbb{S}^2$ with the Riemann sphere $\CC\PP^1$.
Hence, $F(\mathbb{S}^{r-2},m)/{\rm SO}(r-1)$ is the space of $m$ points on the
complex projective line $\CC\PP^1$, which is the well-studied
moduli space $M_{0,m}(\CC)$. To obtain our orbit configuration space in \eqref{eq:confspace} we must further quotient by the action of $-1 \in {\rm O}(r-1)$, which corresponds to complex conjugation in $M_{0,m}(\CC)$.
Note that the moduli space $M_{0,m}(\CC)$ has complex dimension $m-3$, but
here we view it as a real manifold of dimension $2m-6$.
The subspace $M_{0,m}(\R)$ of its real points has real dimension $m-3$,
and it is the union of $(m-1)!/2$ curvy associahedra. This space is the
$r=3$ stratum discussed above. It is worthwhile to check the
dimensions against the formula in~(\ref{eq:dimformula}):
$$ {\rm dim}(\mathcal{M}^0_{P,\sigma,3}) \, - \, {\rm dim}(M_{0,m}(\R)) 
\,\,=\,\,
{\rm dim}(\mathcal{M}^0_{P,\sigma,\leq 4}) \, - \, {\rm dim}(M_{0,m}(\CC)) \,\,\, = \,\,\, n-l .$$
This equals the fiber dimension of the quotient by $(\R_{> 0})^n$, because $P$ has $l$ loops.

From Theorem 2.1 in the article \cite{FZ} 
we now conclude:

\begin{corollary} \label{cor:braid} Consider $n$ massless particles in $4$-dimensional spacetime,
and a signed matroid $(P, \sigma)$ as above.
The kinematic stratum $\mathcal{M}^0_{P,\sigma,\leq 4}$ is homotopy equivalent to
the~moduli space $M_{0,m}(\CC)$ modulo complex conjugation.
\end{corollary}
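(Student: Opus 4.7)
The plan is to deploy Theorem \ref{thm:topology} in the case $r=4$ and then identify the resulting orbit configuration space in complex-analytic terms. Concretely, Theorem \ref{thm:topology} tells us that $\mathcal{M}^0_{P,\sigma,\leq 4}$ is homotopy equivalent to $F(\mathbb{S}^{2},m)/{\rm O}(3)$, where $m$ is the number of parts of $P$. So the entire task reduces to describing this topological quotient.

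The first step is to replace the $(r-2)$-sphere with a familiar complex-analytic object. I would identify $\mathbb{S}^2$ with the Riemann sphere $\CC\PP^1$ via stereographic projection, under which the rotation group ${\rm SO}(3)$ acts as the subgroup of M\"obius transformations preserving the round metric, i.e.\ as ${\rm PSU}(2) \subset {\rm PSL}_2(\CC)$. Since the full automorphism group ${\rm PSL}_2(\CC)$ acts freely and transitively on triples of distinct points of $\CC\PP^1$, and since every ${\rm PSL}_2(\CC)$-orbit of an ordered $m$-tuple of distinct points meets any given ${\rm PSU}(2)$-orbit (by the polar decomposition applied to the stabilizer argument), one obtains a natural homeomorphism
\begin{equation*}
F(\CC\PP^1,m)/{\rm SO}(3) \,\,\simeq\,\, F(\CC\PP^1,m)/{\rm PSL}_2(\CC) \,\, = \,\, M_{0,m}(\CC).
\end{equation*}
This is precisely Theorem~2.1 of Feichtner--Ziegler \cite{FZ}, which I would cite rather than reprove.

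The second step is to account for the remaining $\ZZ/2 = {\rm O}(3)/{\rm SO}(3)$. A representative of the nontrivial coset is the antipodal map $x \mapsto -x$ on $\R^3$, which is an orientation-reversing isometry of $\mathbb{S}^2$. Under the stereographic identification $\mathbb{S}^2 \simeq \CC\PP^1$, the antipodal map corresponds (up to post-composition with a rotation, which is already quotiented out) to the antiholomorphic involution $z \mapsto -1/\bar{z}$, which is conjugate in ${\rm PGL}_2(\CC)$ to standard complex conjugation $z\mapsto \bar{z}$. Hence passing from the ${\rm SO}(3)$-quotient to the ${\rm O}(3)$-quotient corresponds exactly to quotienting $M_{0,m}(\CC)$ by complex conjugation.

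Combining these two steps, I obtain
\begin{equation*}
\mathcal{M}^0_{P,\sigma,\leq 4} \,\simeq\, F(\mathbb{S}^2,m)/{\rm O}(3) \,=\, \bigl(F(\mathbb{S}^2,m)/{\rm SO}(3)\bigr)/(\ZZ/2) \,\simeq\, M_{0,m}(\CC)/\text{conj},
\end{equation*}
which is the claimed corollary. The main obstacle is the second step: one has to verify carefully that the orientation-reversing generator of ${\rm O}(3)/{\rm SO}(3)$ descends to the complex-conjugation involution on $M_{0,m}(\CC)$ rather than to some twisted variant depending on the stereographic normalization. Once the identification of the antipodal map with an antiholomorphic M\"obius involution is pinned down, the rest is immediate from \cite[Thm.~2.1]{FZ} and Theorem \ref{thm:topology}.
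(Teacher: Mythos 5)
Your argument is correct and follows essentially the same route as the paper: apply Theorem \ref{thm:topology} with $r=4$, identify $F(\mathbb{S}^2,m)/{\rm SO}(3)$ with $M_{0,m}(\CC)$ via \cite[Theorem 2.1]{FZ}, and observe that the residual $\ZZ/2$ generated by the antipodal map acts as complex conjugation. The only nitpick is the phrase ``conjugate in ${\rm PGL}_2(\CC)$'': the fixed-point-free involution $z\mapsto -1/\bar z$ is \emph{not} conjugate to $z\mapsto \bar z$; what actually matters, as your parenthetical already indicates, is that they differ by post-composition with the rotation $z\mapsto -1/z \in {\rm PSU}(2)$ and hence induce the same involution on the quotient $M_{0,m}(\CC)$.
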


In physics, one is also interested in particles with spacetime dimension $r \geq 5$.
For these higher dimensions, Theorem \ref{thm:topology} relates the topology of the kinematic strata to the ordered configuration spaces $F(\mathbb{S}^{r-2},m)$. 
We refer to \cite[Corollary 5.3]{FN} for the homotopy type and to \cite[Theorem 5.1]{FZ} for the cohomology ring of these spaces.

\section{Momentum Conservation}
\label{sec5}

We now study the scenario of $n$ massless particles that satisfy momentum conservation; see
\eqref{eq:momcon} in the Introduction.
 The \emph{massless momentum conserving (MMC) region}~$\mathcal{C}^0_{n,r}$ is the semialgebraic set of Mandelstam matrices $S \in {\cal M}^0_{n,r}$ whose row sums and column sums are all zero. By Theorem \ref{thm:massless}, the MMC region admits a decomposition as the disjoint union
\begin{equation} \label{eq:Cstrati}
\mathcal{C}^0_{n,r} \,\, = \,\,\, \bigsqcup_{P,\sigma}\, \mathcal{C}^0_{P,\sigma,r},
\end{equation}
where $ \mathcal{C}^0_{P,\sigma,r}$ is the intersection of $ \mathcal{M}^0_{P,\sigma,r} $ with the
linear subspace   of $\R^{\binom{n}{2}}$ defined by \eqref{eq:momcon}.

\smallskip

In physics, the MMC region $\mathcal{C}^0_{n,r}$ comprises
the Gram matrices for all configurations of $n$ massless particles
that live in $r$-dimensional spacetime and satisfy momentum conservation. These matrices are relevant in the study of massless scattering amplitudes.
For determining which strata survive in the stratification (\ref{eq:Cstrati}),
an important role is played by the signs in~$\sigma$.
Indeed, the intersection of a Mandelstam stratum ${\cal M}^0_{P,\sigma,r}$ with the subspace \eqref{eq:momcon}
 may be empty. This depends
   on the choice of  sign vector $\sigma$. 
We call a signed matroid $(P,\sigma)$ \emph{$r$-momentum conserving} if ${\cal C}^0_{P,\sigma,r}$ is non-empty.
The following result characterizes which signed matroids satisfy this property and determines the dimension of its MMC stratum.

\begin{theorem}\label{thm:MatroidMomentumConservation}
$(P,\sigma)$ is $r$-momentum conserving if and only if the following conditions hold: 
\begin{enumerate}
    \item For $3\leq r < m$: there exist distinct $i,j,k,l$ in $[n]$, with $\sigma_i=\sigma_j = +$ and $\sigma_k=\sigma_l = -$, such that the restriction of 
    the matroid $P$ to $\{i,j,k,l\}$ is either $U_4$ or          $\{ik, jl\}$. \vspace{-0.1cm}
    \item For $2\leq r=m$: every part of $P$ has at least two elements with opposite signs.
\end{enumerate}
Moreover, if $(P,\sigma)$ is $r$-momentum conserving, then the dimension of its MMC stratum is
\begin{equation}
\label{eq:dim2}
\dim({\cal C}^0_{P,\sigma,r}) \,\,=\,\, (m - 1)(r-1) - \binom{r}{2} + (n-l-m) - 1.
\end{equation}
\end{theorem}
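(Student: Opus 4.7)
The plan is to use the parameterization from the proof of Theorem~\ref{thm:massless}: a matrix in $\mathcal{M}^0_{P,\sigma,r}$ is encoded by $m$ distinct unit vectors $x^{(1)},\dots,x^{(m)}\in\mathbb{S}^{r-2}$ (one per part of $P$) and non-zero multipliers $\lambda_i$, for $i$ a non-loop, satisfying $\mathrm{sign}(\lambda_i)=\sigma_i$ after fixing a representative of the sign class. Writing $v^{(\mu)}:=(1,x^{(\mu)})\in\R^r$ and $\Lambda_\mu:=\sum_{i\in P_\mu}\lambda_i$, the MMC constraint becomes the single vector equation
\[
 \sum_{\mu=1}^m \Lambda_\mu\, v^{(\mu)} \,=\, 0 \quad \text{in} \quad \R^r.
\]
Conditions~(1) and~(2) will characterize precisely when this system admits a solution with the prescribed signs on the $\lambda_i$, and the dimension formula will then follow by counting $r$ independent linear constraints.

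When $r=m$ the vectors $v^{(\mu)}$ are linearly independent in $\R^r$ (they span the full space), so the equation forces $\Lambda_\mu=0$ for every $\mu$. Multipliers of prescribed signs summing to zero inside a part $P_\mu$ exist if and only if $P_\mu$ contains at least one element of each sign, which is exactly condition~(2). Sufficiency is immediate: pick any generic $m$-tuple on $\mathbb{S}^{r-2}$ and, within each part, any admissible $\lambda_i$ summing to zero.

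For $3\leq r<m$ the technical core is a convex-hull lemma: in every nontrivial relation $\sum_\mu\Lambda_\mu v^{(\mu)}=0$, at least two $\Lambda_\mu$ are strictly positive and at least two are strictly negative. Grouping positive and negative terms onto opposite sides of the equation, each side becomes a positive multiple of some $(1,y)$ with $y$ a convex combination of distinct points of $\mathbb{S}^{r-2}$; if only one term appears on some side, then $y$ would lie on the sphere yet equal a proper convex combination of other distinct unit vectors, forcing $\|y\|<1$, a contradiction. Necessity of condition~(1) is immediate from the lemma: an MMC configuration provides two parts with $\Lambda_\mu>0$ (each containing a $+$-element, since a $-$-pure part would force $\Lambda_\mu<0$) and two distinct parts with $\Lambda_\mu<0$ (each containing a $-$-element), so choosing one representative per part yields four distinct indices in four distinct parts with sign pattern $+,+,-,-$, realizing the $U_4$-witness.

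For sufficiency when $3\leq r<m$, I would build an explicit MMC configuration from the witness. Starting from a $U_4$-witness $i,j,k,l$ in distinct parts $P_a,P_b,P_c,P_d$, place the four corresponding unit vectors on $\mathbb{S}^{r-2}$ so that the relative interiors of the convex hulls of $\{x^{(a)},x^{(b)}\}$ and $\{x^{(c)},x^{(d)}\}$ meet, which is possible since $r\geq 3$; this produces a relation with sign pattern $(+,+,-,-)$ among these four parts. Remaining parts are handled by balancing multipliers to $\Lambda_\mu=0$ when the part is mixed, and by absorbing forced nonzero contributions from pure parts into the existing relation via small perturbations of the four principal weights, using the extra linear dependence afforded by $m>r$. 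The $\{ik,jl\}$-witness case is analogous, with two mixed parts supplying all four required indices. Finally, the dimension formula \eqref{eq:dim2} follows from Theorem~\ref{thm:massless}: on the rank-$r$ variety, the $n$ linear equations $S\mathbf{1}=0$ reduce to the single vector equation above, imposing exactly $r$ independent linear constraints and therefore lowering $\dim\mathcal{M}^0_{P,\sigma,r}$ by $r$. The main obstacle is the sufficiency construction when many pure parts are present, as one must show the sign constraints on all the absorbed contributions are jointly realizable.
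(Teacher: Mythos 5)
Your overall strategy mirrors the paper's: parametrize a stratum by directions on $\mathbb{S}^{r-2}$ and multipliers, observe that on the rank-$r$ locus the $n$ row-sum equations collapse to the single vector equation $\sum_\mu \Lambda_\mu v^{(\mu)}=0$, and deduce a codimension of $r$. Your convex-hull lemma is a clean geometric substitute for the paper's necessity argument (which instead derives sign contradictions directly from the row sums of $S$), and your treatment of $r=m$ and the dimension bookkeeping agree with the paper. One minor omission in necessity: your lemma only applies when the relation is nontrivial. If all $\Lambda_\mu=0$, the lemma yields no parts of either sign; but then every part is mixed, and since $m>r\geq 3$ forces $m\geq 4$, a witness for condition (1) still exists. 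This degenerate case must be addressed explicitly.

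The substantive gap is in sufficiency for $3\leq r<m$. You propose to cancel the forced contributions of pure parts by ``small perturbations of the four principal weights.'' That is a $4$-parameter family, and the momentum sums it produces sweep out only the linear span of $v^{(a)},v^{(b)},v^{(c)},v^{(d)}$, which is $3$-dimensional by construction (the four vectors satisfy one linear relation). As soon as $r\geq 4$, achieving rank $r$ forces some of the remaining directions --- possibly those of pure parts --- to lie outside this span, and their nonzero contribution to the momentum sum then cannot be absorbed by adjusting weights alone. A concrete instance is $n=m=5$, $r=4$, $P=U_5$, $\sigma=(+,+,+,-,-)$: the fifth direction must leave the $3$-dimensional span to attain rank $4$, yet the fifth particle contributes a nonzero momentum in that complementary direction. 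The paper's construction avoids this by perturbing the four principal \emph{vectors} --- directions as well as magnitudes --- on the light cone: this is a $4(r-1)$-parameter family whose sum map is a submersion onto $\R^r$ (the tangent hyperplanes $p^\perp$ at the four non-parallel points jointly span $\R^r$), so it cancels an arbitrary small total contribution from the remaining parts while preserving all signs and the rank. You correctly flagged sufficiency as the weak point, but the obstruction is this span-versus-rank issue, not merely the joint realizability of the sign constraints on the absorbed contributions.
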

  
\begin{proof} 
We first check the conditions for ${\cal C}^0_{P,\sigma,r}$ to be non-empty. 
For $3 \leq m < r$, let us see why condition 1 is necessary. 
We first assume that all indices $i$ with $\sigma_i = +$
are parallel to each other in $P$. Fix such an index $i$.
Then $s_{ij} \leq 0$ for all $j$. There exists a non-loop $k$
which is not parallel to $i$. We have $s_{ik} < 0$.
These sign conditions imply $\sum_{j=1}^n s_{ij} < 0$, but this is 
a contradiction to momentum conservation (\ref{eq:momcon}). 
The other possible violation of condition  1 
 is that $m=3$, with at most one part
using both signs.  We can reduce this to the case
$n=4$, where $\{1,2\}$ is the unique parallel pair,
and $\sigma = (+,-,+,-)$. Then $s_{13}=-s_{23}$,
$s_{14}=-s_{24}$ and $s_{34} < 0$. From $s_{13}+s_{23} + s_{34} = 0$
and $s_{14}+s_{24}+s_{34}=0$, a contradiction is derived.

For the converse, suppose that $(P,\sigma)$ satisfies condition 1 in the theorem. 
We choose four distinct points $x^{(i)}, x^{(j)},x^{(k)},x^{(l)}$ on the sphere $\mathbb{S}^{r-2}$ such that
$\,x^{(i)} + x^{(j)}= x^{(k)} + x^{(l)}. $
By augmenting these points with a first coordinate $\pm 1$ depending on $\sigma$, 
we define $p^{(i)},p^{(j)},p^{(k)},p^{(l)}$ in $\R^{r}$.
These vectors lie on the light cone and satisfy momentum conservation. 
We next choose the remaining $n-4$ vectors $p$ to be very small but
to match $(P,\sigma)$ and so that all $n$ vectors span $\R^{r}$; 
we can do this for $3\leq r$ as in Theorem \ref{thm:massless}. 
Finally, we make small adjustments to $p^{(i)},p^{(j)},p^{(k)},p^{(l)}$ so that the
$n$ vectors sum to zero in $\R^{r}$. For $r<m$ we can choose the small vectors in such a way that the space
spanned by the $n$ vectors after the modifications is still of dimension $r$.
Then the resulting Mandelstam matrix (\ref{eq:gram}) lies in $\mathcal{C}^0_{P,\sigma,r}$.

For $r=m$, condition 2 is necessary because, summing all vectors in each part, we obtain a linear combination of $m$ independent vectors adding to zero. 
To see that it is also sufficient, we choose multipliers $\lambda_i$ such that the sum
over any of the parts is the zero vector.

\smallskip
The dimension count is similar to  Theorem \ref{thm:massless}. 
Let $S \in \mathcal{C}^0_{P,\sigma,r}$ with $P=P_1\sqcup \cdots \sqcup P_m$, and let $T$ be the
$m \times m$ Mandelstam matrix for the $m$ momentum vectors obtained
by summing the vectors in each part $P_i$.
Then, $T$ is a symmetric rank $r$ matrix with zeros on the diagonal and row/column sums equal to zero. 
The submatrix of $T$ given by eliminating the first row and first column still has
 rank $r$. This gives the $(m-1)(r-1)-\binom{r}{2}$ contribution to our dimension formula. 
Recall that we write momentum vectors as $p=\lambda(1,x)$ with $x \in \mathbb{S}^{r-2}$. 
Given $T$, the sum of the multipliers in each part $P_i$ is fixed.
Hence, we have only $n-l-m$ additional degrees of freedom to choose the multipliers.
Finally, the last $-1$ in our formula (\ref{eq:dim2}) comes from the fact that all multipliers sum to zero.
\end{proof}
  
To appreciate Theorem \ref{thm:MatroidMomentumConservation},
it is instructive to write down some signed matroids which fail to be $r$-momentum conserving. 
In the generic case, when $3\leq r < m$, there are only two disallowed situations:
either all positive elements are in the same part of the matroid $P$, or $\,P=P_1 \sqcup P_2 \sqcup P_3$ 
and all elements in $P_1$ are positive and all elements of $P_3$ are negative.

\begin{corollary} \label{cor:MMCtop}
  For $r\geq 3$, the MMC region has $2^{n-1}-n-1$
  full-dimensional strata $\mathcal{C}^0_{U_{n},\sigma,r}$.
  Each stratum corresponds to a sign vector $\sigma$ with $+$ and $-$ appearing at least twice.
\end{corollary}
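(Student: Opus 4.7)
The plan is to combine the dimension formula from Theorem \ref{thm:MatroidMomentumConservation} with its non-emptiness criterion. I would split the argument into three parts: first, show that full-dimensional strata of $\mathcal{C}^0_{n,r}$ must come from $P = U_n$; second, characterize the sign vectors $\sigma$ for which $(U_n, \sigma)$ is $r$-momentum conserving; third, count those $\sigma$.

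For the first part, I would rewrite the dimension formula \eqref{eq:dim2} in the equivalent form
\[
\dim(\mathcal{C}^0_{P,\sigma,r}) \,\,=\,\, m(r-2) \,+\, (n-l) \,-\, (r-1) \,-\, \binom{r}{2} \,-\, 1.
\]
For $r \geq 3$ both the coefficient $r-2 \geq 1$ of $m$ and the coefficient $1$ of $n-l$ are strictly positive, so the dimension strictly increases with $m$ and with $n-l$. Since $m \leq n - l \leq n$, the unique maximum is attained at $m = n$ and $l = 0$, which corresponds to the uniform matroid $P = U_n$. Any full-dimensional stratum must therefore have the form $\mathcal{C}^0_{U_n,\sigma,r}$ for some $\sigma$.

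For the second part, I apply Theorem \ref{thm:MatroidMomentumConservation} directly. Since $U_n$ has $m = n$ singleton parts, condition 2 (for $r = m$) can never be satisfied, so the statement is vacuous in the degenerate case $r = n$. For $3 \leq r < n$, condition 1 applies: because the restriction of $U_n$ to any four-element subset is $U_4$, the condition reduces to requiring four distinct indices $i, j, k, l$ with $\sigma_i = \sigma_j = +$ and $\sigma_k = \sigma_l = -$. Equivalently, each of the two signs must appear at least twice in $\sigma$, a constraint that is independent of $r$.

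Finally, I count the equivalence classes of sign vectors in $\{-,+\}^n$ modulo $\sigma \sim -\sigma$ in which both signs occur at least twice. The total number of classes is $2^{n-1}$, and the excluded classes are exactly those where some sign occurs at most once: the single class of constant sign vectors, together with, for each position $i \in [n]$, the class containing the sign vector whose minority sign is at position $i$ (the ``one $+$'' and ``one $-$'' variants being identified under $\sigma \sim -\sigma$). Subtracting gives $2^{n-1} - n - 1$, as claimed. The main obstacle, such as it is, is verifying the dimension monotonicity tightly enough to rule out all other $(P,\sigma)$; the rest is routine combinatorics and a direct application of the preceding theorem.
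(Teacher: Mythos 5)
Your proof is correct and follows essentially the same route as the paper, which states this corollary as an immediate consequence of Theorem \ref{thm:MatroidMomentumConservation}: specialize condition 1 to $P=U_n$ (whose restriction to any four elements is $U_4$) and count sign classes modulo $\sigma\sim-\sigma$ in which both signs occur at least twice. Your explicit monotonicity argument from the dimension formula for why full-dimensional strata force $P=U_n$, and your remark on the degenerate case $r=m=n$ (where momentum conservation in fact forces rank at most $n-1$), are details the paper leaves implicit.
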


We now count the MMC strata of a fixed dimension $d$. 
In light of Theorem \ref{thm:MatroidMomentumConservation}, we~set
\begin{equation}
l \,:= \, (m\!-\!1)(r\!-\!1) - \binom{r}{2} + (n\!-\!d\!-\!m)-1 \quad {\rm and} \quad
 M \, := \begin{cases}  \frac{d+r+\binom{r}{2}}{r-1} & {\rm if} \,\,\,\,r \geq 3, \smallskip \\
 \quad 2 & {\rm if} \,\,\,\,r = 2 . \\\end{cases}
 \end{equation}
The number of $r$-momentum conserving signed loopless matroids 
on $n$ elements equals
\begin{equation} \label{eq:doublesum} \quad
N_{n,m}^r \,\,=\,\, \frac{1}{2}\, \sum_{p=2}^{n-2} \sum_{\substack{a,b=2\\ a+b \geq m}}^{m}
 {n\choose p} \St{p}{a} \St{n-p}{b} \, \frac{a! \, b!}{(m-a)!(m-b)! (a+b-m)!} ~~\,
 {\rm for} \,\, r < m.
\end{equation}
Here $p$ denotes the number of indices $i$ with $\sigma_i = +1$.
We define $N_{n,m}^m$ by taking the outer sum in (\ref{eq:doublesum})  from $p=m$ to $p=n-m$.
In analogy to Corollary \ref{cor:countingMassless}, we can now derive:

\begin{corollary}
The number of strata $\,\mathcal{C}^0_{P,\sigma,r}$ of dimension $d$ in the MMC region  \textcolor{LightGreen}{$\,\mathcal{C}^0_{n,r}$} equals
    \[
         \textcolor{LightGreen}{\sum_{m=r}^{M}\binom{n}{l}N_{n-l,m}^r}.
    \]
A similar formula is available for counting the subset of strata that use a fixed sign vector $\sigma$.
\end{corollary}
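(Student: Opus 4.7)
The plan is to mirror the derivation of Corollary~\ref{cor:countingMassless}, replacing Theorem~\ref{thm:massless} with Theorem~\ref{thm:MatroidMomentumConservation}. By the decomposition (\ref{eq:Cstrati}), counting strata of dimension $d$ in $\mathcal{C}^0_{n,r}$ reduces to enumerating signed matroids $(P,\sigma)$ on $[n]$ that are $r$-momentum conserving and whose dimension, as read off from formula (\ref{eq:dim2}), equals $d$.

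First I would fix the number $m$ of parts of $P$. Solving (\ref{eq:dim2}) for the number of loops produces the unique value $l$ stated just before the corollary. The lower bound $m \geq r$ is inherited from Theorem~\ref{thm:massless}, since $\mathcal{C}^0_{P,\sigma,r}\subseteq \mathcal{M}^0_{P,\sigma,r}$, and the upper bound $m \leq M$ encodes the inequality $n - l \geq m$ needed to have enough non-loops to support $m$ parts. With $m$ (and hence $l$) fixed, the factor $\binom{n}{l}$ accounts for choosing the loop set, and the remaining combinatorial task is precisely to count $r$-momentum conserving signed loopless matroids with $m$ parts on the $n-l$ non-loops, which is $N_{n-l,m}^r$.

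To justify the formula (\ref{eq:doublesum}), I would enumerate such loopless signed matroids in three steps. First, select the $p$ positive elements in $\binom{n}{p}$ ways. Second, partition the positives into $a$ unlabeled non-empty parts, contributing $\St{p}{a}$, and analogously the negatives into $b$ parts, contributing $\St{n-p}{b}$. Third, combine these $a+b$ sign-homogeneous parts into the $m$ parts of $P$ by pairing $k := a+b-m$ of the positive parts with the same number of negative parts; choosing the pairs and then matching them yields $\binom{a}{k}\binom{b}{k}k! = \frac{a!\,b!}{(m-a)!(m-b)!(a+b-m)!}$. The leading factor $\frac{1}{2}$ records the identification $\sigma \sim -\sigma$, which merely swaps $p$ with $n-p$.

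The main obstacle is pinning down the summation ranges correctly. For $3 \leq r < m$, I would show that condition 1 of Theorem~\ref{thm:MatroidMomentumConservation} is equivalent to $a \geq 2$ and $b \geq 2$, forcing $2 \leq p \leq n-2$. The forward implication is immediate, since the witnesses $i,j,k,l$ required by condition 1 live in two distinct positive-containing parts and two distinct negative-containing parts. The converse splits by the number $a+b-m$ of mixed parts: with no mixed part, pick $i,j$ in two distinct pos-only parts and $k,l$ in two distinct neg-only parts, realizing the $U_4$ restriction; with at least two mixed parts, pick $i,k$ in one and $j,l$ in another to realize $\{ik,jl\}$; with exactly one mixed part, use the hypothesis $m\geq r+1\geq 4$ to ensure enough pos-only or neg-only parts remain to still build a $U_4$, which rules out only the degenerate case $a=b=2$, $m=3$. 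For the boundary case $r=m$, condition 2 forces every part to be mixed, so $a=b=m$: the inner sum collapses to its $(m,m)$-term while the outer range tightens to $m \leq p \leq n-m$, matching the stated definition of $N_{n,m}^m$.
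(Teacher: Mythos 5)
Your proposal is correct and follows essentially the same route the paper intends: the corollary is stated without a separate proof, as an immediate analogue of Corollary~\ref{cor:countingMassless}, and your derivation of $l$ from \eqref{eq:dim2}, of the bound $m\leq M$ from $n-l\geq m$, of the merging factor $\binom{a}{k}\binom{b}{k}\,k!=\frac{a!\,b!}{(m-a)!(m-b)!(a+b-m)!}$ with $k=a+b-m$, and of the equivalence of condition~1 of Theorem~\ref{thm:MatroidMomentumConservation} with ``$a\geq 2$ and $b\geq 2$'' (using $m\geq r+1\geq 4$ to exclude the one bad configuration $a=b=2$, $m=3$ with a single mixed part) supplies exactly the intended details. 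One remark on the boundary case: the paper literally defines $N_{n,m}^m$ by only restricting the outer sum to $m\leq p\leq n-m$, whereas condition~2 forces every part to be mixed, i.e.\ $a=b=m$; your collapsed inner sum is the correct count (the literal definition would overcount once $n\geq 2m$ and $m\geq 3$, e.g.\ for $n=6$, $r=m=3$ it gives $750$ instead of the correct $60$), so your reading should be taken as the intended one.
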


The poset for the stratification of the MMC region is the restriction of the poset defined in Section \ref{sec3} for the Mandelstam region to $r$-momentum conserving signed matroids. 
The proof of Proposition \ref{prop:nicer}
 descends to the MMC region, showing this is indeed a stratification.
The poset governs when the closure of an MMC stratum contains lower dimensional strata. 
 
 \smallskip
 
 We conclude with a study of the MMC regions for $n=4,5$. 
 The numbers of MMC strata are  given in Table \ref{tab:MMCstrataCountex}.  They are 
  smaller than those for the Mandelstam strata in  Table~\ref{tb:countingKinematicStrata}.

\begin{table}[ht]
    \begin{subtable}[t]{0.49\textwidth}
    \centering
    \scalebox{0.8}{
    \begin{tabular}{c|cc|cc}
        $\mathbf{d}$  / $\mathbf{r}$ & \multicolumn{2}{c|}{$\mathbf{2}$} & \multicolumn{2}{c}{$\mathbf{3}$}\\
                    \hline
        $\mathbf{1}$ & $2$ & \textcolor{LightGreen}{$6$} && \\
        $\mathbf{2}$ &&& $1$ &\textcolor{LightGreen}{$3$}
    \end{tabular}}
    \caption{$n=4$}
            \label{tab:MMCstrataCountexA}

    \end{subtable}
    \begin{subtable}[t]{0.49\textwidth}
        
        \centering
        \scalebox{0.8}{
        \begin{tabular}{c|cc|cc|cc}
            $\mathbf{d}$  / $\mathbf{r}$ & \multicolumn{2}{c|}{$\mathbf{2}$} & \multicolumn{2}{c|}{$\mathbf{3}$} & \multicolumn{2}{c}{$\mathbf{4}$}\\
            \hline
            $\mathbf{1}$ &$6$&\textcolor{LightGreen}{$30$}&&&& \\
            $\mathbf{2}$ &$6$&\textcolor{LightGreen}{$60$}&$3$&\textcolor{LightGreen}{$15$}&&\\
            $\mathbf{3}$ &&&$9$&\textcolor{LightGreen}{$90$}&&\\
            $\mathbf{4}$ &&&$1$&\textcolor{LightGreen}{$10$}&&\\
            $\mathbf{5}$ &&&&&$1$&\textcolor{LightGreen}{$10$}
        \end{tabular}}
                \caption{$n=5$}
        \label{tab:MMCstrataCountexB}
    \end{subtable}
    \caption{Counting MMC strata.}
    \label{tab:MMCstrataCountex}
\end{table}

       \begin{example}[$n=4$] 
\label{ex:vier}
The regions counted in Table \ref{tab:MMCstrataCountexA} can be drawn in the 
$(x,y)$-plane, 
$$ S \,\, = \,\, \begin{small}
\begin{bmatrix}
0 & x & -x-y & y \\
x & 0 & y & -x-y \\
-x-y & y & 0 & x \\
y & -x-y & x & 0 \end{bmatrix}. \end{small}
$$
This matrix has rank $3$. Each triple $I$ in $ [4]= \{1,2,3,4\}$ yields the same inequality
$$ {\rm det}(S_I) \,=\, -2 xy (x+y) \,\,\geq \,\, 0. $$
This inequality defines the MMC region. It consists of
three closed convex cones in $\R^2$. We see that
$\mathcal{C}_{4,\leq 3}^0=\mathcal{C}^0_{4,3} \cup \mathcal{C}^0_{4,2}$ has nine MMC strata.
These are shown  in Figure~\ref{fig:MMCn4}, with red for $r=3$ and blue for $r=2$.
The uniform matroid $U_{4}$ contributes
    $\mathcal{C}^0_{U_{4},\sigma,3} =   \{x <0,y<0\}$ for $\sigma = (+,-,+,-)$,
    $\mathcal{C}^0_{U_{4},\sigma,3} =   \{x+y> 0,x<0\}$ for $\sigma = (+,-,-,+)$,
and $\mathcal{C}^0_{U_{4},\sigma,3} =   \{x+y>0,y<0\}$ for $\sigma = (+,+,-,-)$.
The matroid $P = \{12,34\}$ contributes the rays $\{x=0,y>0\}$ and $\{x=0,y<0\}$,
the matroid $\{13,24\}$ contributes the rays $\{x=-y > 0\}$ and $\{x=-y<0\}$, and
  $\{14,23\}$ contributes the rays $\{y=0,x>0\}$ and $\{y=0,x<0\}$.

\begin{figure}[h]
    \centering
\includegraphics[width=0.6\textwidth]{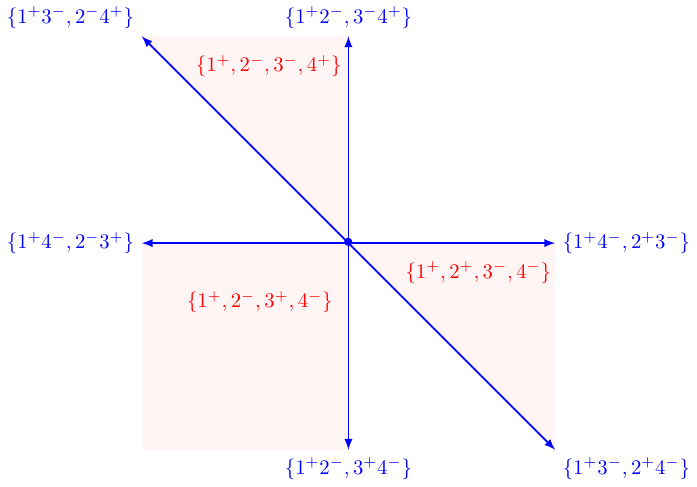}
    \caption{The ${\color{red} 3}+{\color{blue} 6}$ MMC strata for $n=4$.}
    \label{fig:MMCn4}
    \end{figure}
\end{example}

\begin{example}[$n=5$] 
\label{ex:fuenf}
Here $r \in \{2,3,4\}$.
We parametrize the $5$-dimensional space in (\ref{eq:momcon})~by
\begin{equation}
S \,\, = \,\, \begin{small}
\begin{bmatrix}
    0&a&\!\!-a-b+d&b-d-e&e\\
    a&0&b&\!\!-b-c+e&\!\!-a+c-e\\
    -a-b+d&b&0&c&a-c-d\\
   \, \,b-d-e&-b-c+e&c&0&d\\
    e&-a+c-e&a-c-d&d&0
\end{bmatrix}. \end{small}
\end{equation}
The ten matrix entries define a hyperplane arrangement $\mathcal{A}$ with $332$ regions in $\R^5$.
Only $10=2^{n-1}-n-1$ of the regions satisfy the inequalities (\ref{eq:triples}).
Thus  $U_{5}$ contributes $10$ MMC strata for ranks $r=3,4$,
as seen in  Table \ref{tab:MMCstrataCountexB}.
 These
are indexed by the rows in the  table:
$$ \begin{footnotesize} \begin{array}{c|cccccccccc}
    \sigma & s_{12} & s_{13} & s_{14} & s_{15} & s_{23} & s_{24} & s_{25} & s_{34} & s_{35} & s_{45}\\
    \hline 
    (-,-,+,+,+) &+&-&-&-&-&-&-&+&+&+\\
    (-,+,-,+,+) &-&+&-&-&-&+&+&-&-&+\\
    (-,+,+,-,+) &-&-&+&-&+&-&+&-&+&-\\
    (-,+,+,+,-) &-&-&-&+&+&+&-&+&-&-\\
    (+,-,-,+,+) &-&-&+&+&+&-&-&-&-&+\\
    (+,-,+,-,+) &-&+&-&+&-&+&-&-&+&-\\
    (+,-,-,+,+) &-&+&+&-&-&-&+&+&-&-\\
    (+,+,-,-,+) &+&-&-&+&-&-&+&+&-&-\\
    (+,+,-,+,-) &+&-&+&-&-&+&-&-&+&-\\
    (+,+,+,-,-) &+&+&-&-&+&-&-&-&-&+
    \end{array} 
    \end{footnotesize} $$
We fix one sign vector,    say $\sigma = (-,-,+,+,+)$.
The region of $\mathcal{A}$
is the cone $\mathcal{C}$ over the $4$-dimensional cyclic polytope $C(4,6)$,
with f-vector $(6,15,18,9)$.
This agrees with \cite[Example 5.2]{SHVarieties}.
The unique MMC stratum for $r=4$ is  $\mathcal{C}^0_{U_5,\sigma,4}$.
It is the open subset of $\mathcal{C}$ given~by
\begin{equation}\label{eq:quartic}
\begin{matrix}
    a^{2}b^{2}+b^{2}c^{2}+c^{2}d^{2}+d^{2}e^{2}+a^{2}e^{2}
     \,+\,2abcd+2abce+2abde    +2acde+2bcde \qquad\qquad   \qquad\\ 
     \qquad \qquad \qquad \qquad     \qquad \qquad\,\,
        -\,2ab^{2}c   -2bc^{2}d-2cd^{2}e -2ade^{2} -2a^{2}be\,\, < \,\, 0.
\end{matrix}        
\end{equation}
This is the determinant of any principal $4 \times 4 $ minor of $S$.
The quartic hypersurface in $\PP^4$ defined by (\ref{eq:quartic}) is known to algebraic geometers as
the {\em Igusa quartic}.
It separates $\mathcal{C}^0_{U_{5},\sigma,4}$ from its (much smaller) complement in $\mathcal{C}$.
The boundary is the top stratum $\mathcal{C}^0_{U_4,\sigma,3}$ for  $r=3$.

The strata $\mathcal{C}^0_{P,\sigma,3}$ and $\mathcal{C}^0_{P,\sigma,2}$
 for other matroids $P$
are given by Theorem \ref{thm:MatroidMomentumConservation}.
They correspond to faces of $C(4,6)$. No part of $P$ can contain $1$ and $2$
since these are the only negative particles in $\sigma$.
This mirrors the fact that $\{s_{12} = 0\}$ is not a facet of $C(4,6)$. We find it convenient to
draw the dual polytope  $C(4,6)^\circ$, which is the direct product of two triangles:
\begin{center}
    \includegraphics[width=0.45\textwidth]{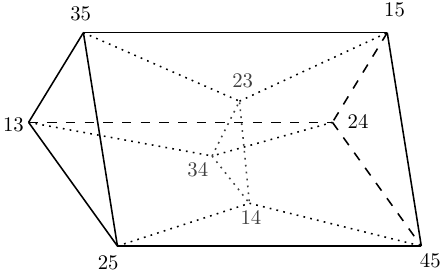}
 %   \vspace{-0.3cm}
\end{center}
Its vertices correspond to the nine 
$3$-dimensional MMC strata.
Finally, the MMC region for $r=3$ has three
$2$-dimensional strata   $\mathcal{C}^0_{P,\sigma,3}$.
These are indexed  by matroids $P$ with one loop, namely $3,4$ or $5$.
Geometrically, they
  correspond to three of the nine square faces of $C(4,6)^\circ$.
 
 The other six squares contribute $2$-dimensional
 MMC strata for $r=2$. For instance, the bottom face
 in our drawing of $C(4,6)^\circ$ gives
 $\{s_{13} = s_{24} = s_{25} = s_{45} = 0\}$.
 Finally, there are six $1$-dimensional strata
 $\mathcal{C}^0_{P,\sigma,2}$. These
 correspond to the six facets (``toblerone'') of $C(4,6)^\circ$.
 These reveal the $2$-momentum conserving $(P,\sigma)$ with
 $P = P_1 \sqcup P_2$ and $|P_1| = |P_2| = 2$.
\end{example}

\section{Stratifications and Scattering}
\label{sec6}

We conclude our study of  stratifications with some observations about how our results relate to the physics of scattering problems in 
quantum field theory \cite{BHPZ}. The object of such problems is to compute the amplitude, which is a function of the Mandelstam variables, $s_{ij}$, that form the entries of the symmetrix matrix $S$
in (\ref{eq:gram}). If the particles being scattered are massless, then the amplitude is a function on the 
MMC region ${\cal C}^0_{n,r}$, where $n$ is the number of particles.

We saw in  Section \ref{sec5} that ${\cal C}^0_{n,r}$ decomposes as the union of strata ${\cal C}^0_{P,\sigma,r}$
which are indexed by signed matroids $(P,\sigma)$.
There are $2^{n-1}-n-1$ full-dimensional strata, ${\cal C}^0_{U_n,\sigma,r}$, labelled only by the sign vector $\sigma$ (Corollary \ref{cor:MMCtop}). Each of these sign vectors corresponds to a distinct physical situation. Write $p^{(i)} = \sigma_i k^{(i)}$, where each $k^{(i)}$ is a future-pointing vector, $k_0^{(i)} > 0$, in the upper nappe of the light cone. Then the momentum conservation relation (\ref{eq:momcon}) reads
\[
\sum_{i :\sigma_i = +} \! k^{(i)} \,\,\,=\,\, \sum_{j : \sigma_j = -} \! k^{(j)}.
\]
This configuration describes the particles with $\sigma_i=+$ coming in from the past, scattering off each other, and then producing the particles with $\sigma_j=-$. For example, the sign vector $\sigma = (+,+,-,-,-,-)$ describes a reaction $1 + 2 \rightarrow 3 + 4 + 5 + 6$, that produces 
four particles from two, while the sign vector $\sigma = (+,+,+,-,-,-)$ describes a reaction $1+2+3 \rightarrow 4+5+6$.

Physicists conjecture that the amplitude can be analytically continued from the region with one sign vector, $\sigma$, to a region with a second sign vector, $\sigma'$, in such a way that the function takes a similar form on both regions. This is called \emph{crossing symmetry}. However, this is difficult to prove, because amplitudes have both poles and branching singularities as analytic functions of the $s_{ij}$, regarded as complex variables. In analyses of crossing symmetry, it is important to understand how the stratification of ${\cal C}^0_{n,r}$ that we have studied extends to
the space of
matrices $S$ with complex entries.  See \cite{Mizera} for a modern study of this problem.

Some of the singularities of amplitudes are captured by the stratifications we have described.
Each stratum ${\cal C}^0_{P,\sigma,r}$ is labeled by a signed matroid $(P,\sigma)$.
This partitions the particles and fixes a sign vector.
The boundaries of this stratum are labelled by $(P',\sigma') < (P,\sigma)$, and these can be produced from $(P,\sigma)$ in one of two ways. First, an entry $i$ of $P$ can become a loop in $P'$, which means that the corresponding vector $p^{(i)}$ becomes the zero vector. This is known as a \emph{soft limit}. Second, two entries $i,j$ of $P$, that are not parallel, can become parallel in $P'$. This is known as a \emph{collinear limit}. Amplitudes often have physically important divergences in these two types of limits. The different ways that nested divergences can arise is captured by chains in the poset of signed matroids that describes the stratification.

In addition to the particles that enter and exit a scattering process, quantum field theory allows for \emph{virtual particles} to arise as an intermediate step. For this reason, amplitudes are sometimes given by integrals over some virtual momentum vectors $\ell^{(i)}$.
  Gram matrices involving both the $p^{(j)}$ and these virtual $\ell^{(i)}$ arise in studies of these integrals and their singularities
\cite{Baikov, Henn}. Fixing the rank of these Gram matrices imposes constraints on their entries of the kind studied in this paper. However, these matrices are not Mandelstam matrices: not all diagonal entries are non-negative, because we allow $\ell^{(i)} \cdot \ell^{(i)} < 0$. Extending our analysis of stratifications to these non-Mandelstam regions will be an interesting problem.

The topology of the strata is studied in Theorem \ref{thm:topology}. 
When considering our customary
 $4$-dimensional spacetime (Corollary \ref{cor:braid}), the regions are
  related to Grassmannians via  \emph{spinor-helicity variables} \cite[Section~1.8]{BHPZ}. Here,
  a momentum vector in $\R^{1+3}$ is specified by a pair of complex vectors $\lambda, \widetilde{\lambda} \in \CC^2$. Following \'Elie Cartan, these are called \emph{spinors}, and they define representations of ${\rm SL}_2(\mathbb{C})$, the double cover of the Lorentz group ${\rm SO}(1,3)$. 
    One writes
\begin{equation}\label{eq:cartan}
s_{ij} \,\,=\,\, p^{(i)} \cdot p^{(j)} \,\,=\,\, [i\,j] \langle i\,j \rangle \,\, = \,\,\det\bigl[ \lambda^{(i)} \lambda^{(j)} \bigr]  \det\bigl[ \,\widetilde{\lambda}^{(i)} \widetilde{\lambda}^{(j)} \bigr].
\end{equation}
These $2 \times 2$ determinants are Pl\"ucker coordinates on ${\rm Gr}(2,n)\times {\rm Gr}(2,n)$.
We obtain our regions only if we impose appropriate reality conditions on the spinors. Namely, we set
\begin{equation} \label{eq:cartansign}
 \widetilde{\lambda}^{(i)} \,\,= \,\,\sigma_i \, \overline{\lambda^{(i)}},
\end{equation}
 where the bar denotes complex conjugation. Then the $s_{ij}$ are real valued and $\text{sgn}(s_{ij}) = \sigma_i \sigma_j$. 
The algebraic geometry behind   \eqref{eq:cartan}
  was studied in  \cite{SHVarieties}. The spinor-helicity variety ${\rm M}(2,n,0)$ (resp.~${\rm M}(2,n,2)$)
  is the Zariski closure of the MMC region $\mathcal{C}^0_{n,4}$ (resp.~$\mathcal{M}^0_{n,4}$).
   Note that, when restricted to $r=4$, our dimensions in (\ref{eq:dimformula}) and (\ref{eq:dim2}) agree with those in  \cite[eqn (32)]{SHVarieties}.

In this paper, we have focused on massless particles ($s_{ii} = 0$), such as gluons.
Our analysis also sets the stage for future work on
 kinematic regions for particles with non-zero masses. 
For any fixed vector ${\bf m} = (m_1,\ldots,m_n)$ of
non-negative masses, we can define analogous regions $\mathcal{M}^{\bf m}_{n,r}$,
$\mathcal{L}^{\bf m}_{n,r}$ and $\mathcal{C}^{\bf m}_{n,r}$.
These arise by restricting to Mandelstam matrices $S$ with
$s_{ii} = m_i^2$ for $i=1,\ldots,n$.
It would be interesting to extend the results of this paper to describe 
these semialgebraic sets and their strata.
In this direction, we give an example.

%%% INCLUDE \usepackage{subcaption} % For subfigures

\begin{figure}[h]
  \centering
  \begin{subfigure}[c]{0.33\textwidth}
  	\includegraphics[width=\textwidth]{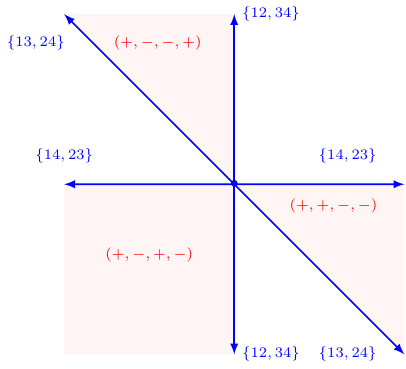}
    \caption{}
    \label{fig:m1}
  \end{subfigure}% 
  \hfill %
  \begin{subfigure}[c]{0.33\textwidth}
  \centering
	\includegraphics[width=\textwidth]{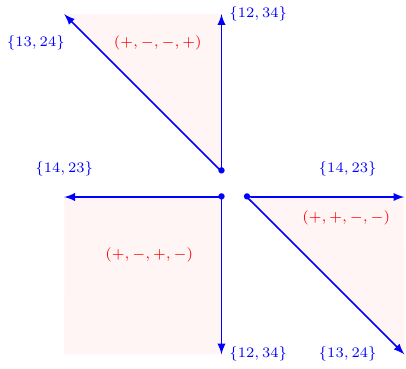}
    \caption{}
    \label{fig:m2}
  \end{subfigure}%
  \hfill%
  \begin{subfigure}[c]{0.33\textwidth}
  \centering
  	\includegraphics[width=\textwidth]{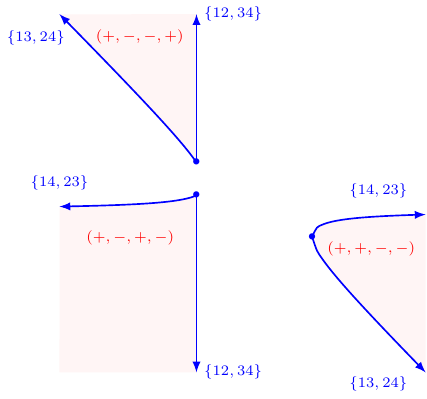}
    \caption{}
    \label{fig:m3}
  \end{subfigure}
  \caption{Regions for (a) massless, (b) equal masses, and (c) two unequal masses.}
  \label{fig:mand}
\end{figure}

For $n=4$ particles, take ${\bf m} = (\mu,\mu,m,m)$, for two masses $m > \mu >0$. We examine the region
$\mathcal{C}^{\bf m}_{4,3}$ by modifying Example  \ref{ex:vier}.
A Gram matrix $S$ in this region takes the form
$$
S \,\,=\,\,\begin{small} \begin{bmatrix} \mu^2 & x & -x-y-\mu^2 & y \\ x & \mu^2 & y & -x-y-\mu^2 \\ -x-y-\mu^2 & y & m^2 & \mu^2-m^2+x \\ y & -x-y-\mu^2 & \mu^2-m^2+x & m^2 \end{bmatrix}\!. \end{small}
$$
 The following inequalities for $ {\cal C}^{(\mu,\mu,m,m)}_{4,3}$ are seen from the $2 \times 2$-minors:
\begin{equation}
y^2 - m^2\mu^2 > 0, \quad (x+\mu^2)(x-2m^2+\mu^2) > 0, \quad (x+y)(x+y+2\mu^2)- \mu^2(m^2-\mu^2) > 0.
\end{equation}
These conditions exclude three strips that are parallel to the blue lines in Figure \ref{fig:m1}. The signs of the
 $3 \times 3$ minors furnish additional cubic inequalities. 
 In our example, with only two distinct masses,
 all $3\times 3$ minors of $S$ are equal and they factor.
  We obtain the condition
\begin{equation}\label{eq:massivecubic}
 \bigl(x+\mu^2\bigr) \bigl(2xy+2y^2+(m^2+\mu^2)x+2\mu^2y- m^2\mu^2+\mu^4 \bigr) \,<\, 0.
\end{equation}
The inequalities give three strata, shown in Figure \ref{fig:m3},
with linear and quadratic boundaries.
  In the limit $\mu,m \rightarrow 0$, the cubic in \eqref{eq:massivecubic} degenerates and we recover the cones of Figure~\ref{fig:m1}. 
  
  The massive case exhibits noteworthy novelties. Note that the stratum $\{1^+,2^+,3^-,4^-\}$ is bounded by the hyperbola in \eqref{eq:massivecubic}.
Whereas, the other two strata, $\{1^+,2^-,3^-,4^+\}$ and $\{1^+,2^-,3^+,4^-\}$, are bounded by both a line and a hyperbola. In physics, these strata correspond to the scattering of a $\mu$ particle and an $m$ particle. The $\{1^+,2^+,3^-,4^-\}$ stratum is different: it corresponds to two $\mu$ particles annihilating and producing two $m$ particles. This physical difference is reflected in the geometry of the strata for this region.

Remarkably, this very example was studied by Mandelstam himself,
in his 1958 article \cite{Mandelstam}.
His corresponding region is shown  in \cite[Figure 1]{Mandelstam}, and it matches
our Figure \ref{fig:m3}.  The study of the on-shell regions
$\mathcal{C}^{\bf m}_{n,r}$ for $n \geq 5$ 
will be an interesting subsequent research project.

%\bigskip
\vfill
\pagebreak
\noindent {\bf Acknowledgement}: 
HF is supported by the U.S. Department of Energy (DE-SC0009988). This project was supported by the ERC (UNIVERSE PLUS, 101118787). 
$\!\!$ \begin{scriptsize}Views~and~opinions expressed 
are however those of the authors only and do not necessarily reflect those of the European Union or the European
Research Council Executive Agency. Neither the European Union nor the granting authority 
can be held responsible for them.
\end{scriptsize}

\medskip \bigskip

\bigskip \medskip

\noindent
\footnotesize {\bf Authors' addresses:}

\noindent Veronica Calvo Cortes,
MPI-MiS Leipzig \hfill {\tt veronica.calvo@mis.mpg.de}

\noindent Hadleigh Frost, IAS Princeton \hfill {\tt frost@ias.edu}

\noindent Bernd Sturmfels, MPI-MiS Leipzig \hfill {\tt bernd@mis.mpg.de}

\end{document}